\documentclass[12pt,oneside]{amsart}
\usepackage{amssymb, amscd, amsmath, amsthm, latexsym, hyperref}
\usepackage{pb-diagram, fancyhdr, psfrag}
 \usepackage[text={6.3in,8.5in},centering,letterpaper,dvips]{geometry}
 
 \usepackage[dvips]{graphicx} 
 \usepackage{color}
\newcommand{\compactlist}{\begin{list}{$\bullet$}{\setlength{\leftmargin}{1em}}}
\def\zz{{\mathbb Z}}

\def\qq{{\mathbb Q}}

\def\rr{{\mathbb R}}
\def\co{\colon\thinspace}

\def\cala{\mathcal{A}}
\def\calc{\mathcal{C}}
\def\calh{\mathcal{H}}

\def\cs{\mathbin{\#}}

\newcommand{\spinc}{\ifmmode{{\mathfrak s}}\else{${\mathfrak s}$\ }\fi}
\newcommand{\spinct}{\ifmmode{{\mathfrak t}}\else{${\mathfrak t}$\ }\fi}

\newcommand{\fig}[2] { \includegraphics[scale=#1]{#2} }
\def\U{\Upsilon}
\DeclareMathOperator{\Wh}{Wh}



\newtheorem{theorem}{Theorem}
\newtheorem{lemma}[theorem]{Lemma}
\newtheorem{corollary}[theorem]{Corollary}
\newtheorem{question}[theorem]{Question}

\theoremstyle{definition}

\newtheorem{example}[theorem]{Example}


\begin{document}
\title{Knot concordances and alternating knots}
\author{Stefan Friedl,  Charles Livingston and Raphael Zentner}
\thanks{ The first and the third authors were  partially supported by the SFB 1085 `higher invariants' at the University of Regensburg.  The second author was  partially supported by a grant from the Simons Foundation and by NSF-DMS-1505586. }

\address{Stefan Friedl: Fakult\"at f\"ur Mathematik, Universit\"at Regensburg, Germany}
\email{sfriedl@gmail.com}

\address{Charles Livingston: Department of Mathematics, Indiana University, Bloomington, IN 47405 }
\email{livingst@indiana.edu}

\address{Raphael Zentner: Fakult\"at f\"ur Mathematik, Universit\"at Regensburg, Germany}
\email{raphael.zentner@mathematik.uni-regensburg.de}


\begin{abstract} There is an infinitely generated free subgroup   of the smooth knot concordance group with the property that no nontrivial element in this subgroup can be represented by an alternating knot.  This subgroup has the further property that every element is represented by a topologically slice knot.

\end{abstract}

\maketitle

\section{Introduction}

Let $\calc_a \subset \calc$ denote the subgroup of the smooth three-dimensional knot concordance group  generated by the set of all alternating knots.  A simple construction reveals that connected sums of alternating knots, as well as their mirror images, are alternating; it follows that every element in $\calc_a$ is represented by an alternating knot.

It has been known that $\calc_a \not= \calc$; in Section~\ref{section:algebraic} we  prove the following result, which also holds in the topological category. 

\begin{theorem}\label{theorem:mainalg}
For every positive integer $N$, there is an infinitely generated free subgroup $\calh_N\subset \calc/\calc_a$ with the following property: 
 If $K$ represents a nontrivial class in $\calh_N$, then any cobordism from $K$ to an alternating knot has genus at least $N$.
 
\end{theorem}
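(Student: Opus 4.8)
The plan is to reduce the cobordism-genus statement to the existence of a concordance homomorphism that vanishes on $\calc_a$ and bounds the (topological) four-genus from below. If $K$ cobounds a locally flat cobordism of genus $g$ with an alternating knot $A$, then $K\cs(-A)$ bounds a surface of genus $g$ in $B^4$, so $g_4(K\cs(-A))\le g$; moreover $-A$ is again alternating, so $K\cs(-A)$ represents the same class as $K$ in $\calc/\calc_a$. Consequently, if $\Phi\colon\calc\to G$ is any homomorphism into a normed abelian group with $\Phi|_{\calc_a}=0$ and $\|\Phi(J)\|\le c\,g_4(J)$ for all $J$, then every cobordism from $K$ to an alternating knot has genus at least $\|\Phi(K)\|/c$, since $\Phi(K\cs(-A))=\Phi(K)$. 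Thus it suffices to construct such a $\Phi$ together with an infinitely generated free family whose nonzero classes satisfy $\|\Phi\|\ge cN$.

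The source of homomorphisms vanishing on $\calc_a$ is the thinness of alternating knots. In the smooth setting this is transparent: for a thin knot the Upsilon function $\U_K$ is determined by $\sigma(K)$, so the reduced homomorphism $\Phi_K(t)=\U_K(t)-\U^{\mathrm{thin}}_{\sigma(K)}(t)$ vanishes on every alternating knot, hence on $\calc_a$, while inheriting a four-genus bound from the bounds on $\U_K$ and on $\sigma(K)$. For Theorem~\ref{theorem:mainalg}, which is topological, I would run the same scheme with $\U$ replaced by topological data: the Tristram--Levine signatures packaged with the Blanchfield pairing, again subtracting the thin model dictated by $\sigma$. Because alternating knots are thin, their algebraic concordance class is of a restricted, $\sigma$-determined type, and one obtains a homomorphism $\Phi$ that kills $\calc_a$ and bounds the locally flat four-genus through the associated signature-type inequalities.

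To build the subgroup $\calh_N$, I would choose knots $K_1,K_2,\dots$ (for instance torus knots, whose signatures and $\U$ are explicitly computable) so that the piecewise-linear invariants $\Phi(K_i)$ have their singularities--the break points of $\U$, equivalently the jump loci of the signatures--at distinct, disjointly localized parameters. Evaluating at these parameters makes the assignment $(a_i)\mapsto\Phi\bigl(\sum_i a_iK_i\bigr)$ injective, so the classes of the $K_i$ span an infinitely generated free subgroup of $\calc/\calc_a$; and since at the $i$-th parameter only the $i$-th summand contributes, any nonzero class $x=\sum_i a_iK_i$ satisfies $\|\Phi(x)\|\ge\min_i v_i$, where $v_i$ measures the $i$-th singularity. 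Rescaling the generators for the given $N$--by connect-summing enough copies, or by passing to torus knots of the form $T(2,2k_i+1)$ with $k_i$ large--arranges $v_i\ge cN$ for all $i$, whence every nonzero class of $\calh_N$ has four-genus at least $N$ after the addition of any alternating knot, which is exactly the asserted genus bound.

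The main obstacle is the topological input: producing a homomorphism that genuinely vanishes on $\calc_a$ and still bounds the locally flat four-genus. One cannot use the Tristram--Levine signature function by itself, since connected sums of alternating torus knots already realize signature functions of essentially arbitrary shape; the vanishing on $\calc_a$ must instead be extracted from thinness at the level of the Blanchfield form and the algebraic concordance group, with the genus bound coming from the corresponding signature-type inequalities. Verifying simultaneously that this $\Phi$ annihilates all of $\calc_a$, bounds the four-genus, and is nonzero with controllable singularities on the chosen family is the heart of the argument; the freeness and infinite generation of $\calh_N$ then follow formally from the distinct-singularity construction.
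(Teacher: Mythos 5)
Your opening reduction is precisely the paper's Lemma~\ref{theorem:alggenusbound}, and your smooth-category scheme (subtracting from $\U_K$ the thin model determined by $\sigma(K)$) is sound; it is essentially how the paper proves the stronger Theorem~\ref{theorem:main}. The genuine gap is in the step you yourself flag as ``the heart of the argument'': producing a homomorphism that vanishes on $\calc_a$ and bounds the four-genus in the topological/algebraic setting. You reject the Tristram--Levine signature function and instead appeal to ``thinness at the level of the Blanchfield form,'' claiming that the algebraic concordance class of an alternating knot is of a restricted, $\sigma$-determined type. That claim is false: the figure-eight knot is alternating with $\sigma=0$, yet it is not algebraically slice (its polynomial $t^2-3t+1$ fails the Fox--Milnor condition), so its algebraic concordance class does not agree with that of the $\sigma=0$ thin model (the unknot). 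You never actually construct the homomorphism $\Phi$ on which everything rests, and the sketch you give depends on this false premise, so the argument does not get off the ground.

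The missing idea---and the paper's actual mechanism---is that the Levine--Tristram signatures \emph{do} suffice, provided one uses their jumps at well-chosen points; what matters is not the shape of the signature function but where its jumps are allowed to occur. By Murasugi's theorem, the Alexander polynomial of an alternating knot has coefficients of alternating sign, hence no negative real roots. The paper exhibits irreducible polynomials $\Delta_n(t)=t^4+nt^3-(2n+1)t^2+nt+1$ having exactly one pair of unit-circle roots $\omega_n^{\pm1}$ and two negative real roots. Since signature jumps occur only at roots of $\Delta_K$, a nonzero jump $J_{\omega_n}(K)\neq 0$ forces $\Delta_n\mid\Delta_K$ by irreducibility, hence $\Delta_K$ would have a negative real root---impossible for an alternating knot. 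Therefore $J_{\omega_n}$ vanishes on every alternating knot, hence on $\calc_a$, while satisfying $|J_{\omega_n}(K)|\le 4g_4(K)$ (in both categories, which is why the theorem holds topologically as well). Seifert realization provides knots $K_n$ with $\Delta_{K_n}=\Delta_n$, and Milnor's theorem gives $J_{\omega_n}(K_n)=\pm 2$ while $J_{\omega_n}(K_m)=0$ for $m\neq n$; the span of $\{2NK_n\}$ is then the required $\calh_N$ (Theorem~\ref{theorem:mainalg2}). So your assertion that ``one cannot use the Tristram--Levine signature function by itself'' is exactly backwards. A further slip: rescaling your generators by torus knots $T(2,2k_i+1)$ is vacuous for this problem, since those knots are alternating and therefore trivial in $\calc/\calc_a$.
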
 \vskip.05in

Our main result  is this generalization.
\begin{theorem}\label{theorem:main}
For every positive integer $N$, there is an infinitely generated free subgroup $\calh_N\subset \calc/\calc_a$ with the following properties. 
\begin{enumerate}
\item If $K$ represents a nontrivial class in $\calh_N$, then any cobordism from $K$ to an alternating knot has genus at least $N$.
 
\item Every class in $\calh_N$ is represented by a topologically slice knot; in particular,  the quotient $\calc_{ts}/(\calc_{ts}\cap \calc_a)$ is infinitely generated, where $\calc_{ts}$ is the concordance group of topologically slice knots.
\end{enumerate} 
\end{theorem}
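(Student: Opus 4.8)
The plan is to separate classes from $\calc_a$ using the Ozsv\'ath--Stipsicz--Szab\'o invariant $\Upsilon$, an additive concordance invariant valued in piecewise-linear functions on $[0,2]$ that behaves well under cobordism. I would begin by recording three facts. First, for every alternating knot $A$ (indeed for every $J \in \calc_a$, by additivity of $\Upsilon$ and $\sigma$ under connected sum and their sign change under mirroring) one has $\Upsilon_A(t) = -\tfrac{\sigma(A)}{2}\bigl(1 - |1-t|\bigr)$; in particular $\Upsilon_A$ is affine on $[0,1]$ and on $[1,2]$, so its graph has a corner only at $t = 1$. Second, $\Upsilon$ satisfies the slice-genus bound $|\Upsilon_K(t)| \le t\,g_4(K)$ for $t \in [0,1]$; applied to $K \cs -A$ this yields the cobordism estimate $|\Upsilon_K(t) - \Upsilon_A(t)| \le t\,g$ whenever $K$ and $A$ cobound a connected genus-$g$ surface in $S^3 \times [0,1]$. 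Third, $\Upsilon_K$ depends only on the concordance class of $K$.

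The genus bound in conclusion (1) would then follow from a corner-detection lemma: if a piecewise-linear $f$ is affine on $[t_0 - \delta, t_0]$ and on $[t_0, t_0 + \delta]$ with a slope jump of size $\kappa$ at $t_0$, and $h$ is affine on $[t_0 - \delta, t_0 + \delta]$, then $\max |f - h| \ge \kappa\delta/4$ there, as one reads off the second difference $f(t_0+\delta) + f(t_0 - \delta) - 2f(t_0)$. Suppose $K$ represents a class whose $\Upsilon$ has a corner of size $\kappa$ at some $t_0 \in (0,1)$ and is affine within distance $\delta$ of $t_0$. If $K$ cobounds a genus-$g$ surface with an alternating knot $A$, then $\Upsilon_A$ is affine near $t_0$ (its only corner sits at $t=1$), so combining the lemma with the cobordism estimate gives $t_0\,g \ge \kappa\delta/4$, hence $g \ge \kappa\delta/(4t_0)$. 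Thus it suffices to produce classes whose $\Upsilon$ has a corner away from $t = 1$ large enough to force $g \ge N$.

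The heart of the argument is then the construction of the generators: topologically slice knots $K_1, K_2, \dots$ such that each $\Upsilon_{K_i}$ has a corner at a private parameter $t_i \in (0,1)$, with the $t_i$ pairwise distinct and no other generator having a corner at $t_i$, and such that the corner sizes can be made arbitrarily large. Topological sliceness I would obtain from Freedman's theorem by working with knots of trivial Alexander polynomial --- for instance suitable Whitehead doubles or cables of torus knots --- and I would pin down the corner locations through explicit computation of the underlying knot Floer complexes, boosting each corner size by passing to connected-sum multiples $\cs^{m}K_i$ until $\kappa_i\delta_i/(4t_i) \ge N$. Granting such a family, distinctness of the corner points makes $\{\Upsilon_{K_i}\}$ linearly independent: if $\sum n_i[K_i]$ were trivial in $\calc/\calc_a$ then $\sum n_i \Upsilon_{K_i}$ would be a single tent with its only corner at $t=1$, forcing every $n_i = 0$ via the corner at $t_i$. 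Hence $\calh_N := \langle [K_1], [K_2], \dots\rangle$ is free of infinite rank; any nonzero element has some $n_j \ne 0$, so its $\Upsilon$ retains a corner at $t_j$ of size $|n_j|\kappa_j \ge \kappa_j$, and the second step gives genus $\ge N$. Since every generator is topologically slice, so is a representative of every class, which simultaneously establishes the infinite generation of $\calc_{ts}/(\calc_{ts} \cap \calc_a)$.

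I expect the third step to be the genuine obstacle. Once the knots are in hand the remainder is formal, but topological sliceness annihilates every classical, signature-type invariant, so the entire separation from $\calc_a$ must be carried by the smooth corner data of $\Upsilon$ --- and that data has to be controlled simultaneously in location, in multiplicity, and in magnitude across an infinite family. Determining the knot Floer complexes of the chosen doubles or cables precisely enough to read off distinct and suitably large corners of $\Upsilon$ is the delicate part of the proof.
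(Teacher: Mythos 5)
Your formal skeleton is sound and in fact parallels the paper's own proof: the paper also separates classes from $\calc_a$ using $\U$, exploiting that $\U_A(t)/t$ equals the constant $\sigma(A)/2$ on $(0,1]$ for alternating $A$, so that any failure of linearity of $\U_K$ on $(0,1]$ obstructs low-genus cobordisms to alternating knots. (The paper packages this more cleanly than your corner-detection lemma: it uses the two-point evaluations $\psi_{s,t}(K)=\U_K(s)/s-\U_K(t)/t$, which are differences of honest homomorphisms $\calc\to\rr$ bounding $g_4$, so the bound $\cala_g(K)\ge\frac12|\psi_{s,t}(K)|$ follows from an elementary lemma with no second-difference estimate needed.) The genuine gap is the step you yourself flag as ``the genuine obstacle'': you never produce the family of topologically slice knots with controlled, distinct singularities of $\U$, and that family \emph{is} the content of the theorem. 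The paper fills this with a specific construction of Ozsv\'ath--Stipsicz--Szab\'o, $K_n = C_{n,2n-1}(\Wh^+(T(2,3),0)) \cs (-T(n,2n-1))$, citing \cite{oss} for the computation that $\U_{K_n}$ vanishes on $[0,2/(2n-1)]$ and is positive just beyond. Note also that your plan to get sliceness ``from Freedman's theorem by working with knots of trivial Alexander polynomial'' does not match how these examples work: the $K_n$ have nontrivial Alexander polynomial; topological sliceness holds because the untwisted double is topologically slice, hence its $(n,2n-1)$-cable is topologically concordant to $T(n,2n-1)$, which is then subtracted. It is the cabling parameter that places the first singularity of $\U$ at $2/(2n-1)$; untwisted doubles by themselves do not supply corners at infinitely many distinct private locations.

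There is also a secondary gap in your genus bound for a general nonzero element $\sum n_i[K_i]$: you calibrate the multiple of $K_j$ using an affine radius $\delta_j$ of $\U_{K_j}$ around $t_j$, but the function you must feed into your corner lemma is the \emph{sum} $\sum n_i\U_{K_i}$, and its affine radius around $t_j$ depends on which other generators occur. Their corners, while distinct from $t_j$, can lie arbitrarily close to it (the locations $t_i$ necessarily accumulate), and a function with a large corner that also has nearby compensating corners can be uniformly close to an affine function, destroying the calibrated bound $\kappa_j\delta_j/(4t_j)\ge N$. The paper avoids this with a triangular structure and a largest-index trick: $\psi_n(K_m)=0$ for all $m<n$ because $\U_{K_m}$ vanishes identically on $[0,2/(2m-1)]$, which contains the evaluation points $a_n,b_n$; one then applies $\psi_p$ for $p$ the largest index appearing with nonzero coefficient. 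Your hypotheses --- pairwise distinct corner locations, no other generator with a corner exactly at $t_i$ --- are enough for linear independence but not for the uniform genus bound; you need something like the paper's nested vanishing intervals to make conclusion (1) go through.
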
 \vskip.05in

These results were inspired by three 
results concerning  alternating knots.  In~\cite{oss}, Ozsv\'ath-Stipsicz-Szab\'o showed that the concordance invariant $\U_K(t)$, a piecewise linear function on $[0,2]$, provides an obstruction to a knot being concordant to an alternating knot.  In an earlier paper, Abe~\cite{abe} showed  that the difference between the Rasmussen invariant and the signature of a knot provides a bound on the    {\it alternation number} of knot, alt($K$), the minimum number of crossing changes required to convert $K$ into an alternating knot. Extending these results, Feller-Pohlmann-Zentner~\cite{fpz} used     $\U_K(t)$ to find another  lower bound on the alternation number. 
 The work here is built from the  key observations of~\cite{abe, fpz}.

 For a knot $K$, let $\cala_g(K)$ denote the minimum genus of a cobordism from $K$ to an alternating knot and   let $\cala_s(K)$ denote the  minimum number of double point singularities in a generically immersed   concordance from $K$ to an alternating knot. 
It is  straightforward  to show that both $\cala_g$ and $\cala_s$ induce functions from  $ \calc/\calc_a $ to the nonnegative integers.

The focus of~\cite{abe, fpz} is on bounding  alt($K$).  Notice that   
a  sequence of crossing changes on a knot yields a singular concordance, and its singularities can be resolved to produce an embedded surface. Thus, a lower  bound on $\cala_g(K)$ yields a bound on $\cala_s(K)$ and hence on alt($K$).    The approach used in~\cite{abe, fpz} leads to much better bounds on alt$(K)$ and $\cala_s(K)$ than can be obtained via those on $\cala_g(K)$.  These will be presented in Section~\ref{section:singular}, after we conclude our work on the bounds on $\cala_g$. 
 
 \vskip.05in
 
In the context of Heegaard Floer homology, alternating knots form a subgroup of the set of {\it quasi-alternating knots}.  Using the results
 of~\cite{man-oz}, all our results extend to this setting; our theorems could be stated in terms of $\calc_{qa}$, the subgroup of the concordance group generated by quasi-alternating knots, which, by \cite[Lemma~2.3]{champanerkar-kofman}, consists precisely of the quasi-alternating knots.

\vskip.05in
\noindent{\it Acknowledgments}  Thanks go to Peter Feller for his helpful comments and his careful reading of a preliminary version of this paper. 

\section{Algebraic invariants}\label{section:algebraic}

We begin with an elementary result.

\begin{lemma}\label{theorem:alggenusbound}
Let $\nu \co \calc \to \rr$ be a homomorphism that vanishes on $\calc_a$ and has the property that for all knots $K$, $g_4(K) \ge |\nu(K)|$.    
The genus of any cobordism from a knot $K$ to an alternating knot is greater than or equal to $|\nu(K)|$; that is, for all $K$, $\cala_g(K) \ge |\nu(K)|$.
\end{lemma}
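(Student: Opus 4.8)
The plan is to reduce the statement about cobordisms to the standing hypothesis $g_4 \ge |\nu|$ by passing to a connected sum with a mirror. Fix an alternating knot $J$ and a cobordism $\Sigma$ from $K$ to $J$ of genus $g = g(\Sigma)$; it suffices to show $g \ge |\nu(K)|$, since taking the infimum over all such $J$ and $\Sigma$ then yields $\cala_g(K) \ge |\nu(K)|$. Writing $-J$ for the reverse of the mirror image of $J$ (an inverse of $J$ in $\calc$), the key inequality I aim to establish is
\begin{equation*}
g_4(K \cs -J) \le g.
\end{equation*}
Granting this, the conclusion follows quickly: since $J$ is alternating we have $[J] \in \calc_a$, so $\nu(J) = 0$; because $\nu$ is a homomorphism, $\nu(K \cs -J) = \nu(K) - \nu(J) = \nu(K)$; and the displayed bound together with the hypothesis gives $g \ge g_4(K \cs -J) \ge |\nu(K \cs -J)| = |\nu(K)|$.

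The algebraic half of the argument is therefore routine, and the real content lies in the geometric inequality $g_4(K \cs -J) \le g$. First I would form the connected sum of cobordisms: taking the boundary connected sum of $\Sigma$ (from $K$ to $J$) with the product cobordism on $-J$ (from $-J$ to $-J$, genus $0$) produces a cobordism $\Sigma'$ from $K \cs -J$ to $J \cs -J$ whose genus is again $g$. Next I would invoke the fact that $J \cs -J$ is slice, bounding a disk $D$ in $B^4$. Viewing $S^3 \times [0,1]$ as a collar and attaching $B^4$ along $S^3 \times \{1\}$, the union $\Sigma' \cup D$ is a surface in the resulting $B^4$ with a single boundary component $K \cs -J$ and genus $g + 0 = g$. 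This is exactly the bound $g_4(K \cs -J) \le g$.

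The main obstacle is making this geometric step precise, namely the passage from a genus-$g$ cobordism in $S^3 \times [0,1]$ to a genus-$g$ surface in $B^4$ bounded by the connected sum $K \cs -J$. The capping construction above isolates the only nontrivial input, the slice-ness of a knot connect-summed with its reverse mirror, and reduces everything else to the additivity of genus under boundary connected sum of cobordisms and under the stacking of a cobordism onto a slice disk. Once these additivity facts are recorded, the estimate $g \ge |\nu(K)|$ is immediate and the lemma follows.
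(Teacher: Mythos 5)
Your proposal is correct and follows essentially the same route as the paper: pass to $K \cs -J$, use $g_4(K \cs -J) \le g$, and then apply the hypotheses $\nu(J)=0$ and $g_4 \ge |\nu|$. The only difference is that the paper simply asserts the standard inequality $g_4(K \cs -J) \le g$, whereas you supply its proof (boundary connected sum with the product cobordism on $-J$, then capping with a slice disk for $J \cs -J$), which is a valid filling-in of that step.
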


\begin{proof} 

 Suppose that there is a genus $g$ cobordism from $K$ to an alternating knot $J$.  Then $K \cs -J$ satisfies $g_4(K \cs -J) \le g$.  Thus, $|\nu (K \cs -J)| \le g$.  Since $\nu$ is a homomorphism and $\nu(J) = 0$, this gives $|\nu(K)| \le g$, as desired.

\end{proof}

Let $J_\eta(K)$ denote the jump in the Levine-Tristram signature function $\sigma_K$
at a point $\eta$ on the unit circle,~\cite{tristram}. For all $\omega$ that are not roots of the Alexander polynomial of $K$,  $|\sigma_K(\omega)| \le 2g_4(K)$. Thus, for all $\eta$,  $|J_\eta(K)| \le 4g_4(K)$.  (According to~\cite{matumoto}, this jump function equals the Milnor signature~\cite{milnor} at $\eta$.)

 To apply these jump functions to study alternating knots, we begin by considering a family of Alexander polynomials whose roots have special properties.

\begin{lemma} For $n\ge 1$, the polynomial $$\Delta_n(t) = t^{4} + n t^3 -(2n+1) t^2+ nt + 1$$ is irreducible in $\qq[t]$. It has a unique root, $\omega_n$, with positive imaginary part, and $\omega_n$ lies on the unit circle.  It has two real roots, both of which are negative.
\end{lemma}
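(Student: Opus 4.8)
The plan is to exploit the fact that $\Delta_n$ is a reciprocal (palindromic) polynomial: its coefficient sequence $1, n, -(2n+1), n, 1$ is symmetric, so whenever $\alpha$ is a root, so is $1/\alpha$. This lets me collapse the degree-four problem to a quadratic one via the substitution $u = t + 1/t$. Dividing by $t^2$ and using $t^2 + t^{-2} = u^2 - 2$ gives
$$t^{-2}\Delta_n(t) = u^2 + nu - (2n+3).$$
Thus the roots of $\Delta_n$ come in pairs $\{t, 1/t\}$, each pair determined by a root $u$ of $q(u) := u^2 + nu - (2n+3)$ through $t^2 - ut + 1 = 0$. The two values of $u$ are $u_\pm = \bigl(-n \pm \sqrt{(n+2)(n+6)}\bigr)/2$, using $n^2 + 4(2n+3) = (n+2)(n+6)$.

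Next I would locate $u_\pm$ relative to the interval $[-2,2]$, since the pair $t^2 - ut + 1 = 0$ consists of two conjugate points on the unit circle exactly when $u \in (-2,2)$, and of two negative reals (product $1$, sum $u$) exactly when $u < -2$. A direct estimate shows $u_+ \in (0,2)$: positivity follows from $(n+2)(n+6) > n^2$, and $u_+ < 2$ from $(n+2)(n+6) < (n+4)^2$, i.e.\ $12 < 16$. Similarly $u_- < -2$ reduces to $(n+2)(n+6) > (4-n)^2$, i.e.\ $16n > 4$. Hence $u_+$ contributes a conjugate pair $e^{\pm i\theta}$ on the unit circle---the one with positive imaginary part is $\omega_n$, unique because the remaining two roots are real---while $u_-$ contributes the two negative real roots. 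This establishes all of the geometric claims.

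For irreducibility over $\qq$ I would invoke Gauss's lemma (so any factorization may be taken to have integer coefficients) and eliminate each factorization type. There are no rational roots, since $\Delta_n(1) = 1$ and $\Delta_n(-1) = 1 - 4n \ne 0$ for integer $n$; hence no linear factor. A factorization into two monic integer quadratics $(t^2 + at + b)(t^2 + ct + d)$ forces $bd = 1$, so $b = d = \pm 1$. The case $b = d = -1$ gives $a + c = n$ and $a + c = -n$ simultaneously, impossible for $n \ge 1$; the case $b = d = 1$ forces $a + c = n$ and $ac = -(2n+3)$, so $a, c$ would be rational roots of $x^2 - nx - (2n+3)$, requiring its discriminant $(n+2)(n+6)$ to be a perfect square.

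The crux is therefore the elementary observation that $(n+2)(n+6)$ is never a perfect square for $n \ge 1$: since $(n+3)^2 = n^2 + 6n + 9 < n^2 + 8n + 12 < n^2 + 8n + 16 = (n+4)^2$, the integer $(n+2)(n+6)$ lies strictly between consecutive squares. This closes the final factorization case, yielding irreducibility. I expect this ``between consecutive squares'' step to be the only genuinely non-mechanical ingredient; everything else is forced once the palindromic structure is recognized and the substitution $u = t + 1/t$ is performed.
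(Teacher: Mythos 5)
Your proposal is correct, and its skeleton is the same as the paper's: the substitution $u = t + 1/t$ to collapse the reciprocal quartic to the quadratic $u^2 + nu - (2n+3)$, followed by Gauss's lemma and a case analysis for irreducibility. Two of your sub-arguments, however, are executed differently, and in both places your version is more complete than the paper's. First, for the unit-circle claim the paper argues indirectly: it only shows the pair of roots coming from $u_+$ is non-real, and then invokes the palindromic symmetry (if $\alpha$ is a root with positive imaginary part, so is $\overline{\alpha^{-1}}$) together with uniqueness to conclude $\alpha = \overline{\alpha^{-1}}$, hence $|\alpha| = 1$. You instead read the conclusion directly off the factor $t^2 - u_+ t + 1$: negative discriminant plus product of roots equal to $1$ forces a conjugate pair on the unit circle. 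Your route is more direct and avoids the uniqueness step. Second, for ruling out quadratic factors the paper is terse --- it constrains the candidates $t^2 + at \pm 1$ via $\Delta_n(1) = 1$ and leaves ``the few possibilities'' to the reader --- whereas you do full coefficient matching, dispose of the $b=d=-1$ case by the sign conflict $a+c = n = -(a+c)$, and reduce the $b=d=1$ case to the observation that $(n+2)(n+6)$ lies strictly between the consecutive squares $(n+3)^2$ and $(n+4)^2$, so it is never a perfect square. That last step is the one genuinely new ingredient relative to the paper, and it turns the paper's ``exercise'' into a closed argument; it also ties the irreducibility neatly to the same discriminant that governs the root locations.
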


\begin{proof}
By Gauss's Lemma, we only need to show that $\Delta_n(t)$ is irreducible in $\zz[t]$.  The only possible linear factors could be $t \pm 1$; these are ruled out by the condition that  $\Delta_n(1) = 1$.  The only possible quadratic factors are of the form $t^2  +at \pm 1$. The condition  $\Delta_n(1) = 1$ constrains $a$ to be in a finite set, and one can check the few possibilities.

We can rewrite  $\Delta_n (t)$ as
$$\Delta_{n}(t) = t^2 \left( (t + t^{-1})^2 + n(t + t^{-1})  - (2n+3) \right).$$ 

By applying the quadratic formula, we see that  roots are solutions to  
$$t + \frac{1}{t} =  \frac{ -n \pm \sqrt{n^2 + 8n+12} }{2} . $$  In the case that the right hand side is negative,  it is clearly less than $-2$. The maximum negative value of $t +\frac{1}{t}$ is $-2$,  and from this one can show that there exist two  real solutions for $t$, both negative.  In the case that the right hand side is positive, a brief calculation shows that it is less than 2. Since the minimum positive value of  $t +\frac{1}{t}$ is $2$,  there are no corresponding real roots.

The symmetry of the polynomial implies that if $\alpha$ is root with positive imaginary part, then so is $\overline{\alpha^{-1}}$.  There is exactly one root $\alpha$ with positive imaginary part, so it must be that $\alpha = \overline{\alpha^{-1}}$ and hence, $\alpha$ lies on the unit circle.

\end{proof}

\begin{theorem} For all $n>0$, the homomorphism $J_{\omega_n}$ vanishes on $\calc_a$.
\end{theorem}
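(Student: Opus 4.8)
The plan is to show that $\omega_n$ is never a zero of the Alexander polynomial of an alternating knot. Since the Levine--Tristram signature function $\sigma_K$ is locally constant away from the zeros of $\Delta_K$ on the unit circle, this forces the jump $J_{\omega_n}(K)$ to vanish for every alternating $K$. Because $J_{\omega_n}$ is additive under connected sum and changes sign under mirror-reversal, it is a homomorphism on $\calc$, so vanishing on the generators of $\calc_a$ --- the alternating knots --- gives vanishing on all of $\calc_a$. Thus everything reduces to the purely algebraic claim that $\Delta_K(\omega_n)\ne 0$ whenever $K$ is alternating.

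To establish this I would first recall the classical theorem of Crowell and Murasugi: the Alexander polynomial of an alternating knot, written as a genuine polynomial $\Delta_K(t)=\sum_i a_i t^i$ in the symmetric normalization, has coefficients whose signs strictly alternate, i.e.\ $(-1)^i a_i\ge 0$ after fixing the overall sign so that the leading coefficient is positive. I would then evaluate $\Delta_K$ at a negative real number $t=-s$ with $s>0$: each term becomes $a_i(-1)^i s^i=\big((-1)^i a_i\big)s^i\ge 0$, and the leading term is strictly positive, so $\Delta_K(-s)>0$. Hence the Alexander polynomial of an alternating knot has no negative real roots.

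Next I would feed in the previous lemma. That lemma shows $\Delta_n$ is irreducible over $\qq$ and, being monic, it is therefore (up to scaling) the minimal polynomial of its root $\omega_n$. If $\omega_n$ were a zero of $\Delta_K$ for some alternating $K$, then, $\Delta_K$ having rational coefficients, the minimal polynomial $\Delta_n$ would divide $\Delta_K$ in $\qq[t]$; in particular every root of $\Delta_n$ would be a root of $\Delta_K$. But the same lemma exhibits two negative real roots of $\Delta_n$, which would then be negative real roots of $\Delta_K$ --- contradicting the previous paragraph. Therefore $\Delta_K(\omega_n)\ne 0$, completing the reduction.

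The only routine verifications are the additivity and concordance-invariance of $J_{\omega_n}$ (already implicit in the preceding discussion, via Matumoto's identification of the jump with the Milnor signature) and the sign bookkeeping in the Crowell--Murasugi evaluation. The genuine content --- and the only place a real idea enters --- is the observation that one need not compute the jump at all: the negative real roots forced on any rational multiple of $\Delta_n$ are incompatible with the alternating-sign constraint on Alexander polynomials of alternating knots, so $\omega_n$ simply cannot occur as a root and the jump vanishes for free. I do not anticipate a serious obstacle; the one point to state carefully is that irreducibility of $\Delta_n$ upgrades ``$\omega_n$ is a root of $\Delta_K$'' to ``$\Delta_n\mid\Delta_K$,'' which is exactly what couples the unit-circle root $\omega_n$ to the forbidden negative real roots.
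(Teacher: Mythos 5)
Your proposal is correct and follows essentially the same route as the paper's own proof: both use Murasugi's alternating-sign theorem for Alexander polynomials of alternating knots to rule out negative real roots, then use irreducibility of $\Delta_n$ to promote ``$\omega_n$ is a root of $\Delta_K$'' to ``$\Delta_n \mid \Delta_K$,'' forcing the forbidden negative real roots and hence $\Delta_K(\omega_n)\ne 0$, so the signature jump vanishes. The only difference is cosmetic: you spell out the evaluation at $t=-s$ showing positivity, which the paper states as an immediate consequence of the sign condition.
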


\begin{proof}
We use the theorem of Murasugi~\cite{murasugi0} stating that alternating knots have Alexander polynomials of the form $\sum_{k=0}^N (-1)^k a_k t^k$ for some $N\ge 0$ and all $a_k >0.$ (See also,~\cite{crowell, murasugi1}.) In particular, for an alternating knot $K$, all real roots of the Alexander polynomial are positive.

Suppose that $J_{\omega_n}(K) \neq 0$.  Nontrivial jumps in the signature function can occur only at roots of the Alexander polynomial, hence $\Delta_K(\omega_n) = 0$.  Since $\Delta_n(t)$ is irreducible with root $\omega_n$, it follows that $\Delta_n(t)$ divides $\Delta_K(t)$, and thus $\Delta_K(\alpha) = 0$, where $\alpha$ is one of the negative real  roots of $\Delta_n(t)$.  Thus, $\Delta_K(t)$ has a negative root, so $K$ is not alternating. 
\end{proof}

By a theorem of Seifert~\cite{seifert}, there exists a knot with Alexander polynomial $\Delta_n(t)$; denote some fixed choice of such a knot by  $K_n$.
In Figure~\ref{figure:Kn} we illustrate one possible choice for $K_n$.  The reader may easily find a Seifert matrix and compute the Alexander polynomial for this knot.  Notice that because one band in the Seifert surface is unknotted and untwisted, the four-genus is at most one; since the Alexander polynomial is irreducible, it is of four-genus exactly one.

\begin{figure}[h]
\fig{.15}{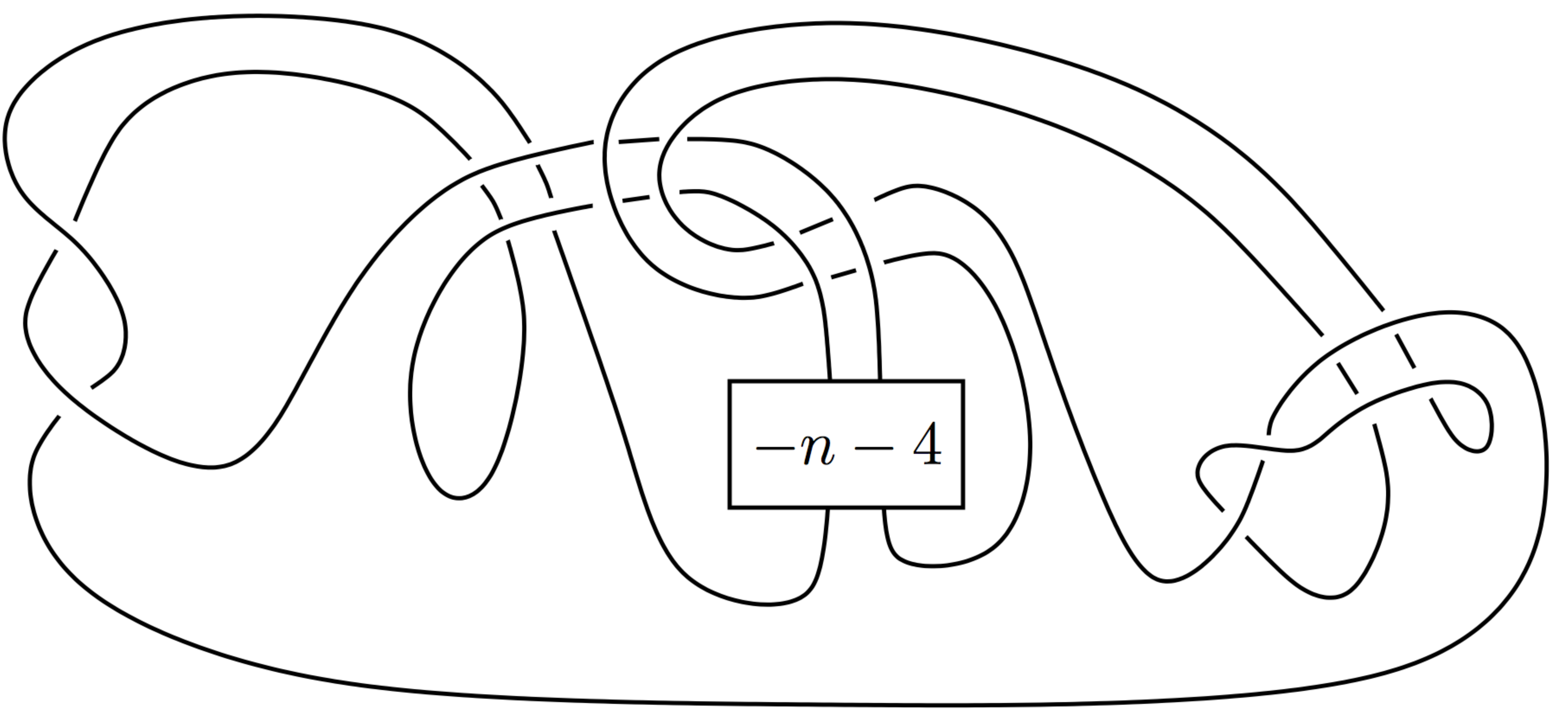}
\caption{$K_n$}\label{figure:Kn}
\end{figure}

We can now prove Theorem~\ref{theorem:mainalg}, which we state in more detail.

\begin{theorem} \label{theorem:mainalg2}  The set of knots $\{K_n\}_{n>0}$ is independent in $\calc / \calc_a$.  For any integer $N >0$, if $K$ is a non-trivial knot in the span of $\{2N K_n\}$,
then any cobordism from $K$ to an alternating knot has genus at least $N$.

\end{theorem}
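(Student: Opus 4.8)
The plan is to use the jump functions $J_{\omega_m}$ as the obstructing homomorphisms and feed them into Lemma~\ref{theorem:alggenusbound}. For each $m>0$ set $\nu_m = \tfrac{1}{4}J_{\omega_m}\co \calc \to \rr$. The bound $|J_\eta(K)|\le 4g_4(K)$ gives $g_4(K)\ge |\nu_m(K)|$ for all $K$, and the theorem that each $J_{\omega_m}$ vanishes on $\calc_a$ shows $\nu_m$ does too; hence every $\nu_m$ satisfies the hypotheses of Lemma~\ref{theorem:alggenusbound}. Everything then reduces to understanding the pairing $J_{\omega_m}(K_n)$, and the whole argument rests on showing that this matrix is diagonal, with each diagonal entry of absolute value exactly $2$. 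Note that independence needs only nonvanishing of the diagonal entries, whereas the genus bound needs their precise magnitude, which is why I would compute the exact value.

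First I would dispose of the off-diagonal terms. Signature jumps occur only at roots of the Alexander polynomial, so $J_{\omega_m}(K_n)=0$ unless $\omega_m$ is a root of $\Delta_{K_n}=\Delta_n$. If it were, then since $\Delta_m$ is irreducible it is the minimal polynomial of $\omega_m$, so $\Delta_m \mid \Delta_n$; as both are monic of degree four this forces $\Delta_m=\Delta_n$, i.e.\ $m=n$. Hence $J_{\omega_m}(K_n)=0$ for $m\ne n$. The diagonal entry $J_{\omega_n}(K_n)$ is the technical heart. Writing $V$ for a Seifert matrix of $K_n$ and $H(\omega)=(1-\omega)V+(1-\bar\omega)V^{T}$ for the Hermitian form computing $\sigma_{K_n}$, a direct factorization on the unit circle yields $\det H(\omega)=\bigl(\tfrac{1-\omega}{\omega}\bigr)^{2g}\Delta_n(\omega)$ up to a unit. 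Because $\Delta_n$ is irreducible over $\qq$ it is separable, so $\omega_n$ is a \emph{simple} root; thus $\det H$ has a simple zero at $\omega_n$ along the circle, exactly one eigenvalue of $H$ crosses zero there transversally, and the signature jumps by exactly $\pm 2$. I expect this eigenvalue-crossing/transversality analysis to be the main obstacle, though it is softened by the fact that a simple zero of $\det H$ leaves no room for a degenerate, non-sign-changing crossing.

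Independence in $\calc/\calc_a$ is then immediate. Suppose $\sum_n a_n K_n$ represents the trivial class, i.e.\ lies in $\calc_a$, with only finitely many $a_n\ne 0$. Applying $J_{\omega_m}$, which vanishes on $\calc_a$, gives $0=\sum_n a_n J_{\omega_m}(K_n)=a_m J_{\omega_m}(K_m)=\pm 2a_m$, so $a_m=0$ for every $m$. Hence $\{K_n\}_{n>0}$ is independent in $\calc/\calc_a$.

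Finally the genus bound. Let $K$ be a nontrivial element in the span of $\{2N K_n\}$, say $K=\sum_n 2N a_n K_n$ with some $a_m\ne 0$. Using the diagonal pairing, $\nu_m(K)=\tfrac14 J_{\omega_m}(K)=\tfrac14\cdot 2N a_m\cdot(\pm 2)=\pm N a_m$, so $|\nu_m(K)|=N|a_m|\ge N$. By Lemma~\ref{theorem:alggenusbound}, any cobordism from $K$ to an alternating knot has genus at least $|\nu_m(K)|\ge N$, as claimed.
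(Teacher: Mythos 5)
Your proposal is correct and takes essentially the same route as the paper: the same jump homomorphisms $J_{\omega_m}$ (suitably rescaled) fed into Lemma~\ref{theorem:alggenusbound}, with the key computation being the diagonal pairing $J_{\omega_m}(K_n)=\pm 2\,\delta_{mn}$, where the off-diagonal vanishing follows from irreducibility of $\Delta_m$ exactly as in the paper. The only divergence is at the diagonal entry: the paper simply cites Milnor's result (via Matumoto's identification of the jump with the Milnor signature) that a multiplicity-one factor $t^2-2\cos(\theta_n)t+1$ of $\Delta_{K_n}$ forces $J_{\omega_n}(K_n)=\pm 2$, whereas you reprove this from scratch via the simple zero of $\det H(\omega)$ and the transversal eigenvalue crossing of the Hermitian Seifert form --- a valid, self-contained substitute for that citation, but not a different argument in structure.
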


\begin{proof} Write $K = \sum_{i=1}^{M} a_i 2N K_i$ with some $a_i \neq 0$ and $M\geq 1$.  Let $n$ be any value of $i$ for which $a_i \neq 0$.

 Over the real numbers, $\Delta_n(t) $ factors irreducibly as $$\Delta_n(t) = (t- \beta_1)(t-\beta_2)(t^2 -2\cos(\theta_n)t +1), $$ where $\omega_n = \cos(\theta_n) + i \sin(\theta_n)$.  According to Milnor~\cite{milnor},  since $t^2 - 2\cos(\theta_n)t +1$ is a factor of multiplicity one in $\Delta_{K_n}(t)=\Delta_n(t)$, it follows that $J_{\omega_n}(K_n) = \pm 2$.   We now have $J_{\omega_n}(K) = \pm 4a_n N$.  Recalling that the jump function is bounded above by $4|g_4(K)|$,  Theorem~\ref{theorem:alggenusbound}, implies that $g_4(K) \ge |a_n N| \ge N$.

\end{proof}


\section{Homomorphisms on $\calc$ and alternating knots}\label{} 

Results such as those in the previous section cannot be applied to topologically slice knots; all signature invariants vanish.  Thus, we are led to consider a new family of knot invariants. 

\begin{theorem}\label{theorem:bound}  Suppose that $\nu_1$ and $\nu_2$ are real-valued homomorphisms on $\calc$ that satisfy $|\nu_i(K)| \le g_4(K)$ for all $K$ and for which $\nu_1(K) = \nu_2(K)$ for all alternating knots $K$.  Then the homomorphism $\psi_{1,2}  = \nu_1 - \nu_2$ induces a homomorphism on $\calc/\calc_a$, and for all knots $K$, 
$$\cala_g(K) \ge \frac{1}{2}|\psi_{1,2}(K)|  .$$
 \end{theorem}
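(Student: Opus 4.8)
The plan is to establish the two assertions separately, both following the template of Lemma~\ref{theorem:alggenusbound}; the one new feature is that $\psi_{1,2}$ is a \emph{difference} of two homomorphisms, and this is what will produce the factor of $\frac{1}{2}$.

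First I would verify that $\psi_{1,2}$ descends to $\calc/\calc_a$. Since $\nu_1$ and $\nu_2$ are homomorphisms, so is their difference $\psi_{1,2} = \nu_1 - \nu_2$. The hypothesis $\nu_1(K) = \nu_2(K)$ for every alternating knot $K$ says precisely that $\psi_{1,2}$ vanishes on each generator of $\calc_a$; a homomorphism that vanishes on a generating set vanishes on the whole subgroup it generates, so $\psi_{1,2}$ vanishes on $\calc_a$ and therefore factors through $\calc/\calc_a$.

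For the genus bound I would argue exactly as in Lemma~\ref{theorem:alggenusbound}. Given a genus $g$ cobordism from $K$ to an alternating knot $J$, capping off yields $g_4(K \cs -J) \le g$. Applying the two hypotheses $|\nu_i(L)| \le g_4(L)$ together with the triangle inequality gives
$$|\psi_{1,2}(K \cs -J)| = |\nu_1(K \cs -J) - \nu_2(K \cs -J)| \le 2\, g_4(K \cs -J) \le 2g.$$
Since $J$ is alternating, $\psi_{1,2}(J) = 0$, so the left-hand side equals $|\psi_{1,2}(K)|$; minimizing over all such cobordisms then yields $\cala_g(K) \ge \frac{1}{2}|\psi_{1,2}(K)|$.

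The whole argument is formal once the hypotheses are unpacked, so I do not expect a genuine obstacle. The only point worth flagging is the origin of the constant $\frac{1}{2}$: in Lemma~\ref{theorem:alggenusbound} a single homomorphism bounded by $g_4$ gives the sharper estimate $\cala_g \ge |\nu|$, whereas here the two separate bounds $|\nu_i| \le g_4$ combine through the triangle inequality into $|\nu_1 - \nu_2| \le 2 g_4$, which is exactly what costs the factor of two in the conclusion.
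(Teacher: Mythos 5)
Your proof is correct and is essentially the paper's argument: the paper simply cites Lemma~\ref{theorem:alggenusbound} (applied to the rescaled homomorphism $\tfrac{1}{2}\psi_{1,2}$, which vanishes on $\calc_a$ and is bounded by $g_4$ via the triangle inequality), and you have merely unwound that same cobordism-capping argument explicitly, with the factor of $\tfrac{1}{2}$ arising in exactly the way the paper intends.
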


\begin{proof}
This is an immediate consequence of Theorem~\ref{theorem:alggenusbound}.
\end{proof}

\begin{lemma}\label{lemma:translate} The following homomorphisms defined on $\calc$ bound the four-genus and are equal on alternating knots.
\begin{itemize}

\item The quotient of the classical knot signature,  $\sigma(K)/2$.\vskip.05in

\item The negative Ozsv\'ath-Szab\'o invariant, $-\tau(K)$.\vskip.05in

\item The quotient of the Rasmussen invariant,  $s(K)/2$. \vskip.05in

\item The Upsilon function $\U_K(t)/t$, for each $t \in (0,1]$.  \vskip.05in

\item The ``little upsilon function,'' 
$\upsilon(K) = \U_K(1)$.\vskip.05in

\end{itemize}
\end{lemma}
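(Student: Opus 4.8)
The plan is to treat the lemma as two independent claims about each of the five invariants—first, that it is a concordance homomorphism on $\calc$ bounding the four-genus, and second, that it agrees with $\sigma(K)/2$ on alternating knots—and in each case to reduce to an established formula from the literature. Since all five maps are known concordance invariants, they descend to $\calc$, and the work is to verify additivity, the genus bound, and the value on alternating generators.

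For the homomorphism and genus-bound property I would invoke the foundational results directly. Additivity of $\sigma$ under connected sum together with the Murasugi inequality $|\sigma(K)| \le 2g_4(K)$ handles $\sigma(K)/2$; the analogous statements for $\tau$ are due to Ozsv\'ath--Szab\'o and for $s$ to Rasmussen, giving $|\tau(K)| \le g_4(K)$ and $|s(K)| \le 2g_4(K)$. For the last two items I would recall from Ozsv\'ath--Stipsicz--Szab\'o~\cite{oss} that for each fixed $t$ the function $K \mapsto \U_K(t)$ is additive and satisfies $|\U_K(t)| \le t\, g_4(K)$ for $t \in [0,1]$. Dividing by $t>0$ preserves additivity and yields the bound $|\U_K(t)/t| \le g_4(K)$ on $(0,1]$, and specializing to $t=1$ gives the bound for $\upsilon(K) = \U_K(1)$.

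The substantive content is the agreement on alternating knots, and the strategy is to pin each invariant to the signature by a known evaluation on alternating (indeed, thin) knots. I would cite the Ozsv\'ath--Szab\'o computation of $\tau$ for alternating knots, which gives $-\tau(K) = \sigma(K)/2$; Rasmussen's identification of $s(K)$ with the signature for alternating knots, giving $s(K)/2 = \sigma(K)/2$; and the Ozsv\'ath--Stipsicz--Szab\'o evaluation of $\U_K$ on thin knots, under which $\U_K$ restricted to $[0,1]$ is the single linear function of slope $\sigma(K)/2$. The last computation yields $\U_K(t)/t = \sigma(K)/2$ for every $t \in (0,1]$ and, at $t=1$, $\upsilon(K) = \U_K(1) = \sigma(K)/2$. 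Because the generators of $\calc_a$ are precisely the alternating knots and all five maps are homomorphisms, it suffices to verify these equalities on alternating knots themselves, which is exactly what the cited formulas supply.

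The one delicate point I expect is the bookkeeping of sign and normalization conventions: the signature, $\tau$, $s$, and $\U$ each carry orientation and chirality conventions, and these must be fixed consistently so that the four displayed normalizations genuinely coincide on alternating knots rather than differing by a sign. I anticipate this being the main place to exercise care, but it is a matter of matching conventions against the cited evaluations rather than a genuine mathematical obstacle; once the conventions are fixed, the lemma follows immediately by assembling the citations above.
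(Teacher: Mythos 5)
Your proposal is correct and follows essentially the same route as the paper: the paper's proof is likewise a citation assembly, invoking Murasugi, Ozsv\'ath--Szab\'o, Rasmussen, and Ozsv\'ath--Stipsicz--Szab\'o for the four-genus bounds and for the agreement of $-\tau(K)$, $s(K)/2$, and $\U_K(t)/t$ with $\sigma(K)/2$ on alternating knots, and then observes that $\upsilon(K)$ is the specialization of $\U_K(t)/t$ at $t=1$. Your extra care about sign and normalization conventions is a sensible addition that the paper leaves implicit.
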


\begin{proof}  The fact that the first four of these invariants bound the four-genus is proved in the  original references,~\cite{murasugi, os1, oss, rasmussen}.  Futhermore, in the references~\cite{ os1, oss, rasmussen} it is shown that each of $-\tau(K)$, $\U_K(t)/t$, and $s(K)/2$, agrees with $\sigma(K)/2$ for alternating knots $K$.  The last, $\upsilon(K)$, is a specialization of $\U_K(t)/t$, used in studying alternating knots in~\cite{fpz}.
\end{proof}

\section{Basic Examples}

\subsection{Examples based on  $\sigma$, $\tau$ and $s$.}  

\begin{example} One can compute that $\sigma(T(3,7))/2 = -4$ and $-\tau(T(3,7)) = -6$; hence $|\sigma(T(3,7))/2 - (-\tau(T(3,7))) |= 2$.  Thus, by Theorem~\ref{theorem:bound}  any cobordism from $T(3,7)$ to an alternating knot must have genus at least one.
\end{example}

\begin{example}
Similarly, according to Hedden and Ording~\cite{hedden-ording}, the values of $-\tau$ and $s/2$ differ by one on the twice-twisted positive Whitehead double of the trefoil knot,  $\Wh^+(T(2,3),2)$, and thus this knot is not concordant to an alternating knot.  Here $\Wh^+(\ \cdot\ ,2)$ denotes the positive clasped, twice twisted, Whitehead double and $C_{n,2n-1}(\ \cdot\ )$ denotes the $(n,2n-1)$--cable.
\end{example}

Notice that $-\tau$ and $s/2$ agree on $T(3,7)$ and so we find that  $T(3,7)$ and  $\Wh^+(T(2,3),2)$ generate a   rank two free subgroup of $\calc/ \calc_a$:  the pair of homomorphisms $\sigma(\cdot)/2 - \tau(\cdot)$ and $\tau(\cdot) + s(\cdot)/2$ define an injection from the span of these two knots in $\calc/\calc_a$ to $\zz \oplus \zz$.

\begin{example}

One can use $\tau$ and $\sigma$ to give examples of  topologically slice knots that are nontrivial in $\calc / \calc_a$.  According to~\cite{livingston1}, the untwisted Whitehead double of the trefoil knot, 
$\Wh^+(T(2,3),0)$, has $\tau = 1$  and $\sigma = 0$.  Thus, it is not concordant to an alternating knot.
\end{example}

\subsection{Examples based on the Upsilion invariant}

For any $s, t \in (0,1]$, we let $\psi_{s,t} (K)= \U_K(s)/s - \U_K(t)/t$.  There is an  immediate consequence of Theorem~\ref{theorem:bound}.

\begin{theorem}\label{theorem:genusbound} For all knots $K$,
$\cala_g(K) \ge \frac{1}{2} | \psi_{t,s}(K)| $.
\end{theorem}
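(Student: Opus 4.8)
The plan is to recognize this as a direct specialization of Theorem~\ref{theorem:bound}, taking the two competing four-genus bounds from the Upsilon family supplied by Lemma~\ref{lemma:translate}. Fix $s,t \in (0,1]$ and set $\nu_1(K) = \U_K(s)/s$ and $\nu_2(K) = \U_K(t)/t$. First I would verify that each $\nu_i$ satisfies the hypotheses of Theorem~\ref{theorem:bound}. By Lemma~\ref{lemma:translate}, for every fixed $r \in (0,1]$ the map $K \mapsto \U_K(r)/r$ is a real-valued homomorphism on $\calc$, so $\nu_1$ and $\nu_2$ are homomorphisms; the same lemma records the four-genus bound $|\U_K(r)/r| \le g_4(K)$, which gives $|\nu_i(K)| \le g_4(K)$.

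Next I would check agreement on alternating knots. Again by Lemma~\ref{lemma:translate}, for every $r \in (0,1]$ one has $\U_K(r)/r = \sigma(K)/2$ whenever $K$ is alternating; in particular $\nu_1(K) = \sigma(K)/2 = \nu_2(K)$ for all alternating $K$, so the two homomorphisms coincide on $\calc_a$.

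With both hypotheses in hand, Theorem~\ref{theorem:bound} applies to $\psi_{1,2} = \nu_1 - \nu_2 = \psi_{s,t}$, yielding $\cala_g(K) \ge \tfrac{1}{2}|\psi_{s,t}(K)|$ for every knot $K$. Because the asserted bound is phrased using $\psi_{t,s}$ and only the absolute value enters, the identity $|\psi_{t,s}(K)| = |\psi_{s,t}(K)|$ completes the argument. The sole point requiring any care is bookkeeping, namely confirming that Lemma~\ref{lemma:translate} supplies the homomorphism property and the genus bound for each individual parameter value rather than only in some averaged sense; there is no genuine obstacle here, since the substantive content has already been front-loaded into Theorem~\ref{theorem:bound} and Lemma~\ref{lemma:translate}.
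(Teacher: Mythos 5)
Your proposal is correct and is exactly the paper's argument: the paper states this theorem as an immediate consequence of Theorem~\ref{theorem:bound}, with the hypotheses (homomorphism, four-genus bound, agreement with $\sigma(K)/2$ on alternating knots) supplied by Lemma~\ref{lemma:translate} for $\nu_1(K) = \U_K(s)/s$ and $\nu_2(K) = \U_K(t)/t$. Your added bookkeeping, including the observation $|\psi_{t,s}(K)| = |\psi_{s,t}(K)|$, simply makes explicit what the paper leaves implicit.
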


\begin{example}
Consider the difference $K = T(3,7) \# - T(4,5)$.   Figure~\ref{figure:3745}  illustrates the function $\U_K(t)/t$.  Now let $\psi_{t,s}(K)  =\U_K(s)/s - \U_K(t)/t$. As the graph indicates, $|\psi_{0,2/3}(K)| = 1$.  Thus, any cobordism  from $K$ to an alternating knot must have genus at least 1.  This could not have been determined using $\tau(K) =0$ or $\upsilon(K) = 0$.  Similarly, separate computations show that  both the Rasmussen invariant and the signature vanish.  Thus, to detect this knot, we must use $\U_K(t)$ for some $t \in (0,1)$.  

\begin{figure}
\fig{.6}{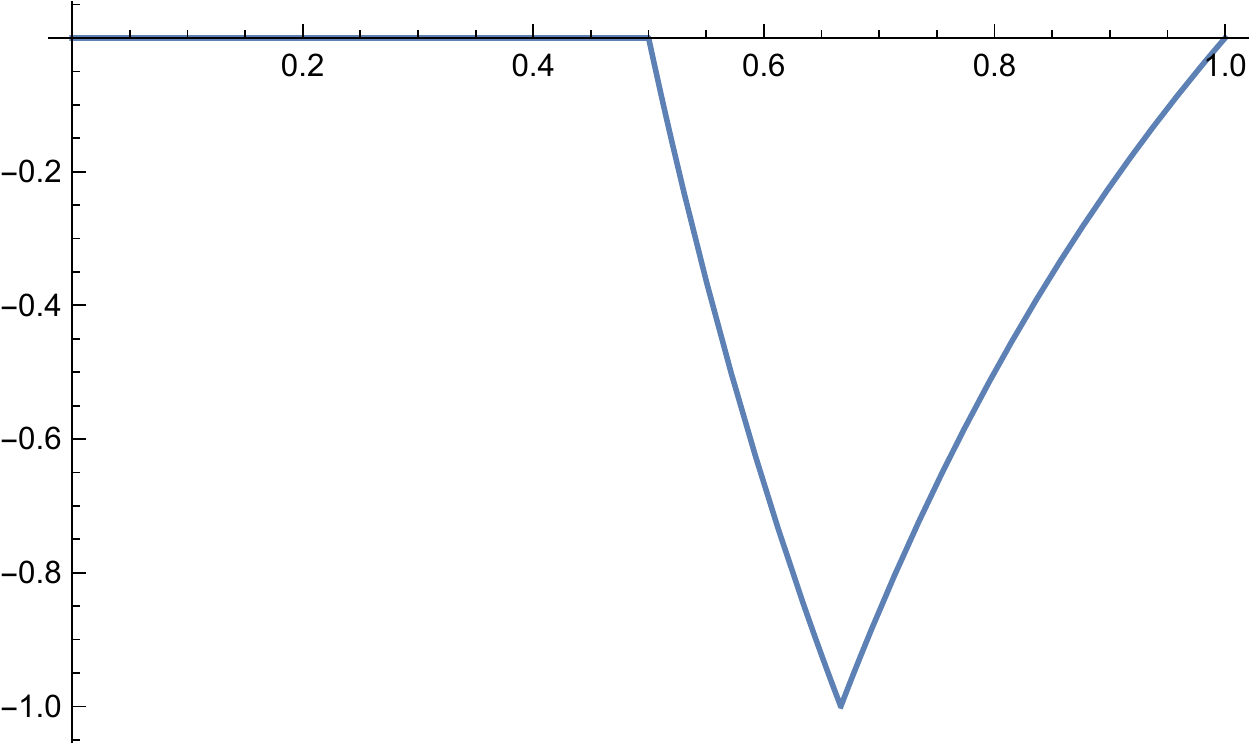}
\caption{$\U_{T(3,7) - T(4,5)}(t)/t$}\label{figure:3745}
\end{figure}
\end{example}

\subsection{An infinite independent family in $\calc / \calc_a$.}

By using different values of $s$ and $t$, we prove the following result.
 
\begin{theorem} \label{theorem:torus} $(1)$  The set of knots $\{T(p,p+1)\}$,  $p\ge 3$, is linearly independent in $\calc/\calc_a$.  $(2)$ For every integer $N >0$,  there exists a sequence of positive integers $\{N_p\}$ such that the set  $S = \{N_pT(p,p+1)\}$ has the property that every non-trivial knot in span$(S)$ satisfies $\cala_g(J) \ge N$.
\end{theorem}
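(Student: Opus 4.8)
The plan is to detect elements of $\operatorname{span}(S)$ using the homomorphisms $\psi_{s,t}(K)=\U_K(s)/s-\U_K(t)/t$ of Theorem~\ref{theorem:genusbound}, choosing the pair $(s,t)$ adapted to the knot at hand. The essential input is the shape of $\U_{T(p,p+1)}$ coming from the Ozsv\'ath--Stipsicz--Szab\'o computation for torus (L-space) knots: $\U_{T(p,p+1)}$ is piecewise linear and convex on $[0,1]$ with $\U_{T(p,p+1)}(0)=0$, initial slope $-g_p$, where $g_p=\binom{p}{2}=g(T(p,p+1))=\tau(T(p,p+1))$, and first breakpoint at $t=2/p$. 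Two consequences drive everything. First, since $\U_{T(p,p+1)}(0)=0$ and the function is convex, the secant slope $\U_{T(p,p+1)}(t)/t$ equals the constant $-g_p$ on $(0,2/p]$ and is \emph{strictly} increasing on $[2/p,1]$. Second, the first breakpoints $2/p$ are strictly decreasing in $p$, so among any finite set of indices the largest index has the innermost first breakpoint.

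Next I fix, for each $p\ge 3$, a detecting pair. Put $s_p=1/p$ and $t_p^\ast=\tfrac1p+\tfrac1{p-1}$; both lie in $(0,1)$, and $s_p<2/p<t_p^\ast<2/(p-1)$. Because $s_p<2/p$ lies in the constant region while $t_p^\ast>2/p$ lies strictly beyond the first breakpoint, the first consequence gives that $D_p:=\psi_{s_p,t_p^\ast}(T(p,p+1))=-g_p-\U_{T(p,p+1)}(t_p^\ast)/t_p^\ast$ is a fixed \emph{nonzero} real depending only on $p$ (indeed $D_p<0$). Crucially, for every integer $q<p$ one has $s_p<2/p<2/q$ and $t_p^\ast<2/(p-1)\le 2/q$, so both points lie in $(0,2/q)$, where $\U_{T(q,q+1)}(u)/u$ is the constant $-g_q$; hence $\psi_{s_p,t_p^\ast}(T(q,q+1))=0$ for all $q<p$.

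Now I carry out the detection. Write a nontrivial class as $J=\sum_{p\in F}a_p N_p\, T(p,p+1)$ with all $a_p\in\zz$ and $a_{p_0}\neq 0$ for $p_0=\max F$. Since $\U$ is additive under connected sum, so is each $\psi_{s,t}$, and by the previous paragraph the pair $(s_{p_0},t_{p_0}^\ast)$ annihilates every term of index below $p_0$ while evaluating the top term to $D_{p_0}$; thus $\psi_{s_{p_0},t_{p_0}^\ast}(J)=a_{p_0}N_{p_0}D_{p_0}$. Taking all $N_p=1$ this equals $a_{p_0}D_{p_0}\neq 0$, so $J$ is nontrivial in $\calc/\calc_a$, which proves the linear independence in $(1)$. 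For $(2)$, set $N_p=\lceil 2N/|D_p|\rceil$, a positive integer. Then $|\psi_{s_{p_0},t_{p_0}^\ast}(J)|=|a_{p_0}|\,N_{p_0}|D_{p_0}|\ge N_{p_0}|D_{p_0}|\ge 2N$, and Theorem~\ref{theorem:genusbound} yields $\cala_g(J)\ge \tfrac12|\psi_{s_{p_0},t_{p_0}^\ast}(J)|\ge N$.

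The main obstacle is the first paragraph: everything rests on the precise local behavior of $\U_{T(p,p+1)}$ near its first breakpoint, namely that the normalized function $\U_{T(p,p+1)}(t)/t$ is constant up to $t=2/p$ and strictly increasing just beyond, and that these first breakpoints are distinct and ordered. I would extract these facts from the OSS staircase description of $\U$ for L-space knots together with the convexity of $\U_{T(p,p+1)}$ on $[0,1]$; the remaining steps (selecting the pairs $(s_p,t_p^\ast)$, using additivity to kill the lower-index terms, and scaling by $N_p$) are then routine.
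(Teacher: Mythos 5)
Your proof is correct and follows essentially the same approach as the paper: detect the largest-index term with $\psi_{s,t}$ for $s,t$ straddling the first singularity $2/p$ of $\U_{T(p,p+1)}$ but both below $2/(p-1)$, then scale by $N_p$ for part (2). The only differences are cosmetic: you use explicit points $1/p$ and $1/p+1/(p-1)$ and justify nonvanishing via convexity of $\U$ for L-space knots, whereas the paper picks $2/p$ and $2/p+\epsilon_p$ and cites the location of the first two singularities from Ozsv\'ath--Stipsicz--Szab\'o.
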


\begin{proof}

To simplify notation, let $K_p = T(p,p+1)$.
 Using the methods of~\cite{oss}, we have that the first singularity of $\U_{K_p}(t)$ is at $t = 2/p$ and the second singularity is at $4/p$.  

For each $p$, choose an $\epsilon_p$ with $2/p < 2/p + \epsilon_p < 2/(p-1)$. Consider the functions $\phi_{2/p , 2/p + \epsilon_p}$, which we abbreviate $\psi_p$ for the moment.  Then $\psi_p(K_p)\ne 0$ and $\psi_p(K_n) = 0$ for all $n <p$.  (We use here the fact that $\U_{K}(0) = 0$ for all knots $K$, so $\U_{K_p}(t) = c_pt$ on the interval $t \in [0,2/p]$ for some $c_p$.)

Suppose that some finite linear combination is trivial:  $\sum a_nK_n =0  \in \calc_a$.  Let $p$ be the largest value of $n$ for which $a_n \ne 0$.  Applying $\psi_p$ to the sum yields a nonzero multiple of $a_n$.  But $\psi_p$ would vanish if the sum were in $\calc_a$, implying that $a_p = 0$, a contradiction.  This completes the proof of linear independence.

For   statement (2), for each  $p$ select any integer $N_p$ so that    $\psi_p(N_pT(p,p+1)) \ge 2N$.   Following the same argument as in  (1), let $J_p = N_pT(p,p+1)$.   If $J = \sum a_nJ_n =0  \in \calc_a$, let $p$ be the largest $n$ for which $a_n \ne 0$.  Then $|\psi_p(J) |= |\psi_p(J_p)|  \ge 2N$.

\end{proof}

\section{Proof of Theorem~\ref{theorem:main}; an infinite family of topologically slice knots.}

We use the examples developed in~\cite{oss}.  For $n \ge 2$, let  
$$K_n = {\rm C}_{n,2n-1}(\Wh^+(T(2,3),0)) \#(-T(n,2n-1)).$$
Here  ${\rm C}_{n,2n-1}(\ \cdot\ )$ denotes the $(n,2n-1)$--cable.
This knot is topologically slice; to see this, observe that  the untwisted double is topologically slice, so its $(n,2n-1)$--cable is topologically concordant to $T(n,2n-1)$.   The proof of  ~\cite[Theorem 1.20]{oss} includes a computation of certain values of $\U_{K_n}(t)$. In summary:

\begin{theorem}\label{theorem:comp} For each $n>0$ there is an $\epsilon_n>0$ such that 

$$ \begin{cases}\U_{K_n}(t)
=0 &\mbox{if\  } t\le \frac{2}{2n-1}\\
\U_{K_n}(t)>0 &\mbox{if\  } \frac{2}{2n-1} <t < \frac{2}{2n-1} + \epsilon_n.
\end{cases}$$
\end{theorem}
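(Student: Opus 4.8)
The plan is to reduce the statement to the behavior of $\U$ on the two connected summands, using the additivity of $\U$ under connected sum together with $\U_{-K}(t) = -\U_K(t)$. Writing $D = \Wh^+(T(2,3),0)$ for the companion, this gives
$$\U_{K_n}(t) = \U_{{\rm C}_{n,2n-1}(D)}(t) - \U_{T(n,2n-1)}(t),$$
so it suffices to describe each term on $[0, \tfrac{2}{2n-1} + \epsilon_n]$ and compare. The content of the theorem is then that these two piecewise linear functions coincide exactly on $[0, \tfrac{2}{2n-1}]$ and that the cable term becomes strictly larger immediately afterward; the actual values are extracted from the computation in the proof of~\cite[Theorem 1.20]{oss}.

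First I would dispose of the torus knot summand using the same torus knot machinery of~\cite{oss} that locates the singularities of $\U_{T(p,p+1)}$ in Theorem~\ref{theorem:torus}. On its initial segment $\U_{T(n,2n-1)}(t)$ is linear with slope $-\tau(T(n,2n-1)) = -(n-1)^2$, and its first singularity occurs at $t = 2/n$. Since $n < 2n-1$ we have $2/n > 2/(2n-1)$, so once $\epsilon_n$ is chosen small enough that $\tfrac{2}{2n-1}+\epsilon_n < 2/n$, the torus term is the single linear function $\U_{T(n,2n-1)}(t) = -(n-1)^2\,t$ throughout the range under consideration.

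The substantive step is the cable term, and here I would invoke the explicit satellite computation in~\cite{oss}. The two inputs I need are: (i) on $[0, \tfrac{2}{2n-1}]$ the function $\U_{{\rm C}_{n,2n-1}(D)}(t)$ is linear with slope $-(n-1)^2$ — equivalently $\tau({\rm C}_{n,2n-1}(D)) = (n-1)^2 = \tau(T(n,2n-1))$, so that the value $\tau(D) = 1$ of the companion does not enhance the cable's initial slope in this framing range; and (ii) the first singularity of $\U_{{\rm C}_{n,2n-1}(D)}(t)$ sits precisely at $t = \tfrac{2}{2n-1}$, where its slope jumps strictly upward. Granting (i) and (ii) and subtracting, one obtains $\U_{K_n}(t) \equiv 0$ on $[0, \tfrac{2}{2n-1}]$; and for $t$ just above $\tfrac{2}{2n-1}$ the cable term has slope strictly greater than the still-linear torus term, so the difference $\U_{K_n}(t) = \U_{{\rm C}_{n,2n-1}(D)}(t) - \U_{T(n,2n-1)}(t)$ rises from $0$ and is strictly positive on $(\tfrac{2}{2n-1}, \tfrac{2}{2n-1}+\epsilon_n)$.

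The main obstacle is precisely items (i)–(ii): computing $\U$ of the iterated satellite ${\rm C}_{n,2n-1}(D)$, whose difficulty comes from the interplay between $D$ being topologically slice and having $\tau(D) = 1$. Since this is exactly the calculation carried out in~\cite{oss} via the knot Floer complex of cables, I would transcribe those values rather than reprove them. The only work on our side is to record the elementary comparison $\tfrac{2}{2n-1} < 2/n$, which guarantees that in the overlap region the singular cable term is being compared against a purely linear torus term, making the sign of the difference immediate.
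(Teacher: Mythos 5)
Your reduction via additivity of $\U$, and your treatment of the torus knot summand (initial slope $-(n-1)^2$, first singularity of $\U_{T(n,2n-1)}$ at $2/n$), are fine, and your item (ii) — that the first singularity of $\U_{{\rm C}_{n,2n-1}(D)}$ sits exactly at $t=\tfrac{2}{2n-1}$ with an upward slope jump — is also correct. The genuine error is item (i). Hedden's computation shows $\mathit{CFK}^\infty(D)\simeq \mathit{CFK}^\infty(T(2,3))\oplus(\text{acyclic})$, so $\epsilon(D)=\epsilon(T(2,3))=1$, and Hom's cabling formula (J.~Topology {\bf 7} (2014)) then gives
$$\tau({\rm C}_{n,2n-1}(D)) \;=\; n\,\tau(D) + \tfrac{(n-1)(2n-2)}{2} \;=\; n+(n-1)^2,$$
not $(n-1)^2$: the companion's $\tau=1$ \emph{does} enhance the cable's initial slope, by exactly $n$. (Equivalently, $\U_{{\rm C}_{n,2n-1}(D)}=\U_{{\rm C}_{n,2n-1}(T(2,3))}$, and ${\rm C}_{n,2n-1}(T(2,3))$ is an L-space knot, so its initial slope is minus its genus $n+(n-1)^2$.) Consequently $\tau(K_n)=n$ and $\U_{K_n}(t)=-nt\neq 0$ for small $t$; just beyond $\tfrac{2}{2n-1}$ one finds $\U_{K_n}(t)=(n-1)t-2$, which is still negative (for $n=2$, $\U_{K_2}(t)=t-2$ on $[2/3,1]$). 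So neither the vanishing nor the positivity in your conclusion can be obtained this way, and item (i) cannot simply be ``transcribed'' from~\cite{oss}, because it is false.

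You should be aware that the obstruction is not only in your write-up: since $\U_K$ has slope $-\tau(K)$ at $t=0$, the statement as printed forces $\tau(K_n)=0$, which contradicts the formula above; so the theorem itself is misstated. What the computation behind \cite[Theorem 1.20]{oss} actually supports — and all that the paper uses downstream — is the same assertion for the function $\U_{K_n}(t)+nt$: that is, $\U_{K_n}$ is linear (of slope $-n$) on $[0,\tfrac{2}{2n-1}]$ and its slope breaks strictly upward at $\tfrac{2}{2n-1}$. This corrected version feeds into Corollary~\ref{corollary:calc} and Theorem~\ref{theorem:main} unchanged, because the invariants $\psi_{s,t}(K)=\U_K(s)/s-\U_K(t)/t$ annihilate any function linear through the origin, so only the location of the first slope break matters. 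For comparison, the paper's own ``proof'' is a one-line citation of the computation inside the proof of \cite[Theorem 1.20]{oss}; your proposal is more ambitious, and its skeleton (additivity, locating both first singularities, comparing slopes) is exactly how one would verify the corrected statement — one just replaces (i) by the true value $\tau({\rm C}_{n,2n-1}(D))=n+(n-1)^2$ and concludes for $\U_{K_n}(t)+nt$ rather than for $\U_{K_n}(t)$.
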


To simplify notation, we let $a_n = \frac{2}{2n-1}$ and $b_n = a_n + \epsilon_n$.  Each $\epsilon_n$ can be chosen so that $b_{n+1} < a_n$. 

We now let  $\psi_n = \phi_{a_n, b_n}$.  As an immediate corollary to Theorem~\ref{theorem:comp} we have the following.
\begin{corollary}\label{corollary:calc} For all $n>0$, $\psi_n(K_n) >0$ and $\psi_n(K_m) = 0$ for $m<n$.
\end{corollary}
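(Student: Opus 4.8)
The plan is to unwind the definition $\psi_n = \phi_{a_n,b_n}$, so that $\psi_n(K)$ is the difference of the two normalized values $\U_K(a_n)/a_n$ and $\U_K(b_n)/b_n$, and then simply read off these values from Theorem~\ref{theorem:comp}. Everything reduces to locating the two evaluation points $a_n = \frac{2}{2n-1}$ and $b_n = a_n + \epsilon_n$ relative to the single threshold $a_m = \frac{2}{2m-1}$ below which $\U_{K_m}$ vanishes. No genuine obstacle arises; the work is entirely bookkeeping with the ordering of the points.

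First I would dispose of the vanishing statement $\psi_n(K_m) = 0$ for $m < n$. Because $a_m = \frac{2}{2m-1}$ is decreasing in $m$, the inequality $m < n$ gives $a_m \ge a_{n-1}$, and the nesting hypothesis $b_{k+1} < a_k$, taken at $k = n-1$, gives $b_n < a_{n-1}$. Chaining these inequalities yields $a_n < b_n < a_{n-1} \le a_m$, so both $a_n$ and $b_n$ lie in the range $t \le a_m$ on which Theorem~\ref{theorem:comp} asserts $\U_{K_m}(t) = 0$. Hence $\U_{K_m}(a_n) = \U_{K_m}(b_n) = 0$, and therefore $\psi_n(K_m) = 0$.

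Next I would prove $\psi_n(K_n) > 0$. The point $a_n = \frac{2}{2n-1}$ falls in the range $t \le \frac{2}{2n-1}$, so $\U_{K_n}(a_n) = 0$ by the first line of Theorem~\ref{theorem:comp} and the corresponding term of $\psi_n(K_n)$ drops out. The remaining term is $\U_{K_n}(b_n)/b_n$, which is positive because $b_n$ lies where $\U_{K_n} > 0$. Hence $\psi_n(K_n) = \pm\,\U_{K_n}(b_n)/b_n$ is nonzero, and positive for the ordering of subscripts under which the corollary is stated.

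The only point requiring care — the closest thing to an obstacle — is endpoint bookkeeping. Theorem~\ref{theorem:comp} guarantees positivity of $\U_{K_n}$ only on the open interval $(\frac{2}{2n-1},\, \frac{2}{2n-1}+\epsilon_n)$, so to obtain $\U_{K_n}(b_n) > 0$ I should take $b_n$ to be a point strictly inside this interval rather than its right endpoint. Since the thresholds $a_n$ are fixed and strictly decreasing while each $\epsilon_n$ may be chosen arbitrarily small, this placement of $b_n$ is simultaneously compatible with the global nesting requirement $b_{n+1} < a_n$ invoked in the second paragraph, and with these choices in force both assertions of the corollary follow at once from Theorem~\ref{theorem:comp}.
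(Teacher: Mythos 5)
Your proof is correct and takes essentially the same approach as the paper: the paper offers no separate argument, presenting the statement as an immediate consequence of Theorem~\ref{theorem:comp}, and your write-up is exactly the bookkeeping (ordering of $a_n < b_n < a_{n-1} \le a_m$ via the nesting condition $b_{n} < a_{n-1}$) that the paper leaves implicit. Your handling of the endpoint issue --- taking $b_n$ strictly inside the interval of positivity, i.e.\ shrinking $\epsilon_n$ --- tightens a detail the paper glosses over, as does your note on the sign convention for $\psi_n$.
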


\subsection{Proof of Theorem~\ref{theorem:main}} The proof of Theorem~\ref{theorem:main}  is identical to that given above for Theorem~\ref{theorem:torus}, with the family of knots $K_n$ replacing the torus knots $T(p,p+1)$, and using   Corollary~\ref{corollary:calc} instead of the simpler result concerning the values of $\U_{T(p,p+1)}(t)$.   The subgroup $\calh_N$ is now generated by the set of knots $\{N_nK_n\}$ for appropriately chosen $N_n$.


\section{Singular concordances}\label{section:singular}

In this section we will consider the count of double points in singular concordances between knots.  In the case that one of the knots is the unknot, this is a well studied invariant; references include~\cite{cochran-gompf, kawamura, owens-strle}.

It is known that each of the  knot invariants $\sigma(K), -\tau(K), s(K)/2,$ and $\U_K(t)/t$, remains unchanged or increases by one if a positive crossing is changed to be a negative crossing.  This fact is implied by the following stronger theorem.  The proof is related to the proof of the crossing change bounds for $\tau$ and $\U_K(t)/t$ given in~\cite{livingston1, livingston2}.  We give the argument for more general invariants.

\begin{lemma}\label{theorem:one-sided} Let $\nu$ be a homomorphism from $\calc$ to $\rr$ satisfying
 \begin{enumerate} \item For all $K$, $|\nu(K)| \le g_4(K)$, \mbox{and}\vskip.05in
 \item $\nu(J) = -1$ for some knot $J$ with the property that changing a single crossing in $J$ from positive to negative yields a  slice knot.
 \end{enumerate}  Suppose there is a singular concordance from  $K_1$ and $K_2$ with precisely $s=s_+ + s_-$ double points, where $s_+$ is the number of  positive double points.   Then  $$ - s_+\le \nu(K_1) -\nu(K_2) \le s_-.$$

\end{lemma}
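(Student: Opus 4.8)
The plan is to reduce the statement to an absolute bound on a single knot, and then to extract the asymmetry from connected sums with the model knot $J$.

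First I would pass to the connected sum $W = K_1 \cs -K_2$. Since $\nu$ is a homomorphism, $\nu(K_1) - \nu(K_2) = \nu(W)$, and the standard construction turns the given singular concordance into a generically immersed disk in $B^4$ with boundary $W$ carrying exactly $s_+$ positive and $s_-$ negative double points (the signs being preserved). The claim becomes the absolute inequality $-s_+ \le \nu(W) \le s_-$. Because the mirror $-W$ bounds an immersed disk with $s_-$ positive and $s_+$ negative double points and $\nu(-W) = -\nu(W)$, the lower bound for $W$ is exactly the upper bound for $-W$; so it suffices to prove the single inequality $\nu(W) \le s_-$, for which I would use the mirror model knot $-J$, which satisfies $\nu(-J)=1$ and becomes slice after one negative-to-positive crossing change.

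Next I would bring in copies of the model. Consider $W \cs s_+(-J)$. Since $\nu(-J)=1$, we have $\nu\bigl(W \cs s_+(-J)\bigr) = \nu(W) + s_+$, so in view of $|\nu|\le g_4$ the bound $\nu(W)\le s_-$ follows once I exhibit an embedded surface of genus at most $s_+ + s_-$ with boundary $W \cs s_+(-J)$. Such a surface is built from the obvious immersed disk for $W \cs s_+(-J)$: it carries the $s_+$ positive and $s_-$ negative double points of $W$, together with one further negative double point from each copy of $-J$ (each $-J$ bounds an immersed disk with a single negative double point, coming from its defining crossing change). I would then pair each positive double point of $W$ with the negative double point contributed by one copy of $-J$, resolve each of these $s_+$ opposite-sign pairs by an embedded piece of genus one, and tube the remaining $s_-$ negative double points, for total genus $s_+ + s_-$.

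The main obstacle is precisely this resolution of an opposite-sign pair at the cost of a single handle. Tubing the two double points of a pair separately costs genus two, and a positive/negative pair cannot in general be cancelled outright; here the hypothesis on $J$ is essential, since the slice disk of the slice knot obtained from $-J$ by its defining crossing change supplies the Whitney-type disk needed to merge each pair with only one genus increment. This step, where the fixed sign of the double point of $J$ breaks the symmetry of the naive genus bound $|\nu|\le g_4$ and yields the one-sided conclusion, is the technical heart of the argument, and it parallels the crossing-change inequalities for $\tau$ and $\U_K(t)/t$ established in~\cite{livingston1, livingston2}.
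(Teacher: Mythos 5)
Your overall route is essentially the paper's: pass to $W = K_1 \cs -K_2$, connect-sum with mirror copies of $J$ to shift the value of $\nu$, convert the resulting immersed disk into an embedded surface, and apply $|\nu| \le g_4$. Your two variations are minor and harmless: you dispose of the lower bound by mirroring at the outset, where the paper instead proves it directly from a genus-$s_+$ surface and splits into the cases $s_- \le s_+$ and $s_+ < s_-$; and you use $s_+$ copies of $-J$ and a genus-$(s_+ + s_-)$ surface, where the paper uses only $s_+ - s_-$ copies and a genus-$s_+$ surface. The arithmetic closes either way, since the extra slack sits on the unused side of the absolute-value inequality.

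However, the step you single out as the ``technical heart'' is misdescribed, and as written your justification of it is not a proof. Merging two double points of opposite sign at the cost of one handle is the standard tubing operation: remove from one sheet two small disks around its two intersection points with the other sheet, and join them by a tube running parallel to that other sheet; the opposite signs are precisely what makes the tube compatible with orientations. This is purely local, requires no Whitney disk, and has nothing to do with the hypothesis on $J$ --- indeed, the paper performs exactly this tubing on pairs of double points of $W$ itself, with no copy of $J$ in sight. A Whitney disk is what you would need to cancel a pair at genus cost \emph{zero}, which is not being claimed. Moreover, the slice disk of the knot obtained from $-J$ by the crossing change has already been spent: it caps off the trace of the crossing-change homotopy to produce the immersed disk with a single negative double point that $-J$ bounds, and it cannot serve a second time as a Whitney-type disk for a pair involving a double point coming from $W$. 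The hypotheses on $J$ enter only through that immersed disk and through $\nu(-J)=+1$. (A terminological slip as well: single double points are \emph{resolved}, at genus one each, while opposite-sign pairs are \emph{tubed}; your ``tube the remaining $s_-$ negative double points'' should read ``resolve them,'' which is what your genus count assumes.) With these corrections your construction and count are right, and the lemma follows.
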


\begin{proof} 
  We show that for a knot $K$ bounding a singular disk with $s_+$ positive double points and $s_-$ negative double points,
 $$- s_+\le \nu(K)  \le  s_-.$$ This can be applied to $K = K_1 \# - K_2$ to complete the proof of the theorem.
 
We discuss the case of $s_- \le s_+$.  The singular disk can be converted into an embedded surface by tubing together $s_-$ pairs of positive and negative crossing points, and then resolving the remaining $s_+ - s_-$ double points.  This yields a surface bounded by $K$ of genus $s_+$.  Thus,
$  g_4(K) \le s_+$, so we also have $|\nu(K)| \le s_+$, and in particular,  $-s_+ \le \nu(K)$.

Notice that $K \# -(s_+ - s_-)J$ bounds a singular disk with $s_+$ negative double points as well as $s_+$ positive double points.  By tubing the double points together, we have 
$g_4(K \# -(s_+ - s_-)J) \le s_+$.  It follows that 
$$|\nu(K \# -(s_+ - s_-)J) | \le s_+.$$  Using the fact that $\nu(J) = -1$, this gives
$$| \nu(K) +s_+ - s_- |\le s_+,$$ and in particular, $\nu(K) \le s_-$.  This completes the proof in the first case, $s_- \le s_+$.  The second case can be proved similarly, or one can apply the first case to the knot $-K$.

\end{proof}

\noindent{\bf Note.} For each of the knot invariants we are considering, the knot $J$ can be taken to be $T(2,3)$.

The following bound on $\cala_s(K)$ is similar to the one given on $\cala_g$ in Theorem~\ref{theorem:genusbound}; note, however,  that the bound has been doubled. 

\begin{theorem}  Let $\nu_1$ and $\nu_2$ be any two homomorphisms on $\calc$ that satisfy the two conditions of Theorem~\ref{theorem:one-sided} and which agree on alternating knots. Then  the  homomorphism $\psi_{1,2}  = \nu_1 - \nu_2$ induces a homomorphism on $\calc/\calc_a$, and for all knots $K$, 
$$\cala_s(K) \ge |\psi_{1,2}(K)|  .$$
 \end{theorem}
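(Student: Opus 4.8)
The plan is to run the genus argument of Theorem~\ref{theorem:bound} verbatim, but to substitute the sharp \emph{asymmetric} inequalities of Lemma~\ref{theorem:one-sided} for the crude symmetric estimate $|\nu(K)|\le g_4(K)$; this substitution is exactly what upgrades the constant from $\tfrac12$ to $1$. First I would dispatch the homomorphism claim. Since $\nu_1$ and $\nu_2$ are homomorphisms on $\calc$ that agree on every alternating knot, their difference $\psi_{1,2}=\nu_1-\nu_2$ is a homomorphism vanishing on each alternating knot, hence on the whole subgroup $\calc_a$ that these knots generate; therefore $\psi_{1,2}$ factors through $\calc/\calc_a$. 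This is the same reasoning used for Theorem~\ref{theorem:bound}.

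Next I would set up the geometric input. Fix a knot $K$ and let $J$ be an alternating knot admitting a generically immersed concordance to $K$ realizing $\cala_s(K)$, with exactly $s=s_++s_-$ double points, of which $s_+$ are positive and $s_-$ negative. Both $\nu_1$ and $\nu_2$ satisfy the two hypotheses of Lemma~\ref{theorem:one-sided} by assumption, so applying that lemma to each of them (with $K_1=K$ and $K_2=J$) yields, for $i=1,2$,
$$-s_+ \le \nu_i(K)-\nu_i(J) \le s_-.$$

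The final step is to subtract these two chains. Because $J$ is alternating, $\nu_1(J)=\nu_2(J)$, so the $J$-terms cancel and
$$\psi_{1,2}(K)=\bigl(\nu_1(K)-\nu_1(J)\bigr)-\bigl(\nu_2(K)-\nu_2(J)\bigr).$$
Writing $A=\nu_1(K)-\nu_1(J)$ and $B=\nu_2(K)-\nu_2(J)$, both lie in the interval $[-s_+,s_-]$, whence $A-B\in[-(s_++s_-),\,s_++s_-]$ and therefore $|\psi_{1,2}(K)|\le s_++s_-=s$. Minimizing over all such immersed concordances from $K$ to an alternating knot gives $\cala_s(K)\ge|\psi_{1,2}(K)|$, as claimed.

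I do not expect a serious obstacle here: the whole content is the bookkeeping in the last step. The one point deserving emphasis is \emph{why} the bound is twice the genus bound of Theorem~\ref{theorem:genusbound}. The symmetric estimate $|\nu_i|\le g_4$ would only control $|A|$ and $|B|$ separately and force a factor of $\tfrac12$; it is precisely the one-sidedness of Lemma~\ref{theorem:one-sided} --- the fact that $A$ and $B$ are each pinned between $-s_+$ and $s_-$ with the same orientation of the double points --- that lets the positive and negative counts combine \emph{additively} under subtraction, producing the full $s_++s_-$ with no loss.
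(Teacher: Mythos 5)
Your proposal is correct and takes essentially the same route as the paper: both apply Lemma~\ref{theorem:one-sided} to each $\nu_i$ along the singular concordance to the alternating knot, use $\nu_1(J)=\nu_2(J)$ to cancel the alternating-knot terms, and combine the two one-sided chains of inequalities --- your subtraction of the two chains $A,B\in[-s_+,s_-]$ is exactly the paper's addition of the first chain to the negation of the second --- to conclude $|\psi_{1,2}(K)|\le s$ and then minimize over singular concordances. Your closing remark about the one-sidedness being what removes the factor $\tfrac12$ also matches the paper's own comment that this bound is the doubled analogue of Theorem~\ref{theorem:genusbound}.
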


\begin{proof}  Suppose that there is a singular concordance from $K$ to an alternating knot $K'$ with $s_+$ positive double points and $s_-$ negative double points.  Let $s = s_+ +s_-$. By Theorem~\ref{theorem:one-sided}, 
$$- s_+ \le \nu_1(K) -\nu_1(K') \le s_-$$
and
$$- s_- \le \nu_2(K') -\nu_2(K) \le s_+.$$
Adding these and using the fact that $\nu_1(K') = \nu_2(K')$ shows 
$$
-s \le \nu_1(K) - \nu_2(K) \le s,$$
which can be written in terms of $\psi_{1,2}$ as $|\psi_{1,2}(K) | \le s$.  This hold for all possible $s$ (including its minimum value) so  $|\psi_{1,2}(K) | \le \cala_s(K)$ as desired.

\end{proof}

\begin{example}
Consider $J = T(3,7) \# - T(2,11)$.  An illustration of $\U_J(t)$ is given in Figure~\ref{figure:37211}.
\begin{figure}
\fig{.6}{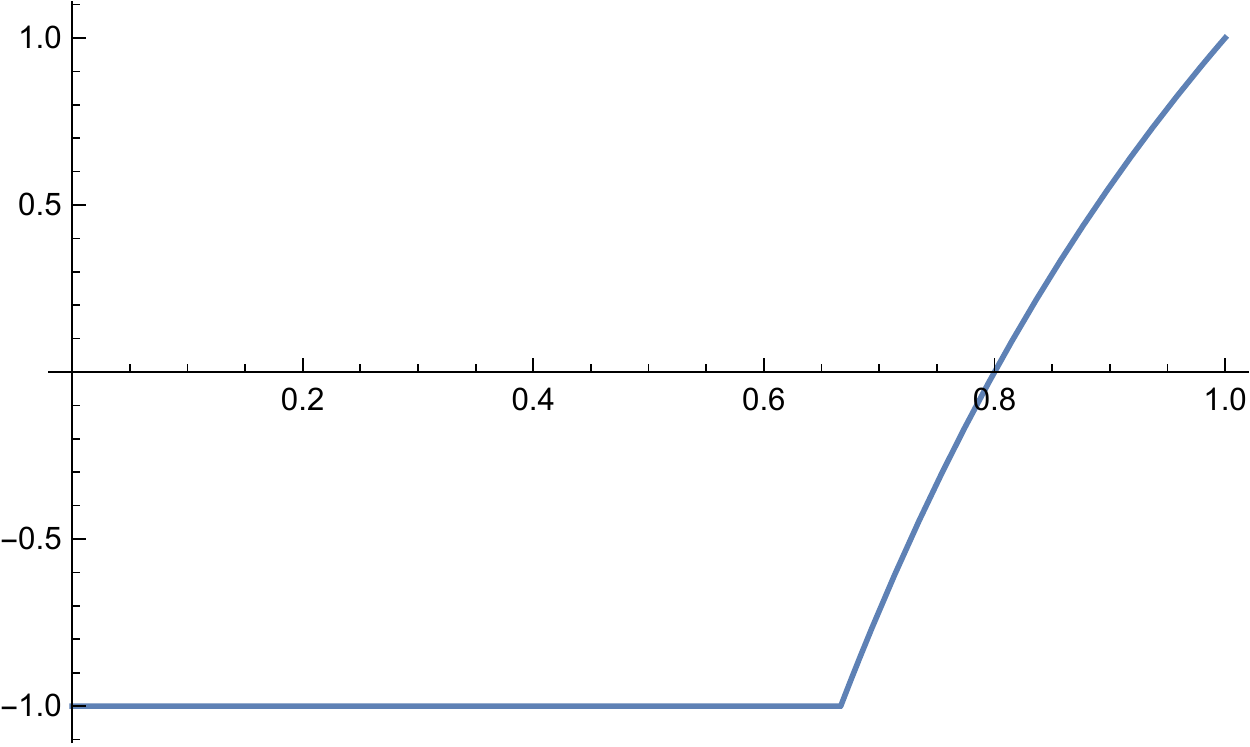}
\caption{$\U_{T(3,7) \# - T(2,11)}(t)/t$}\label{figure:37211}
\end{figure}
For this knot, we compute the  difference $|\upsilon(J) - \tau(J)| = 2.$  Thus, any singular concordance from $J$ to an alternating knot must have at least two singular points.  On the other hand, a result of Feller~\cite{feller0} implies that this knot has four-genus one, and hence there is a genus one cobordism from $J$ to an alternating knot.  Two crossing changes convert $T(3,7)$ into an alternating knot (see, for instance~\cite{kanenobu}), so $\cala_s(J) \le 2$.

In summary, for the knot $J = T(2,11) - T(3,7)$, $\cala_g(J) =1$ and $\cala_s(J) = 2.$   That is, both bounds from Theorem~\ref{theorem:main} are realized.
\end{example}

There is an immediate theorem, parallel to Theorem~\ref{theorem:main}.

\begin{theorem}\label{theorem:singular}
For every positive integer $N$, there is an infinitely generated free subgroup $\calh_N\subset \calc/\calc_a$ with the following properties. 
\begin{enumerate}
\item If $K$ represents a nontrivial class in $\calh_N$, then any generic singular concordance from $K$ to an alternating knot has at least $N$ double points.
 
\item Every class in $\calh_N$ is represented by a topologically slice knot; in particular,  the quotient $\calc_{ts}/(\calc_{ts}\cap \calc_a)$ is infinitely generated, where $\calc_{ts}$ is the concordance group of topologically slice knots.
\end{enumerate} 
\end{theorem}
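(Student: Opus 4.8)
The plan is to run the argument for Theorem~\ref{theorem:main} essentially verbatim, with the genus bound of Theorem~\ref{theorem:genusbound} replaced by the bound $\cala_s(K)\ge|\psi_{1,2}(K)|$ established above. I would keep both the family of topologically slice knots
$$K_n = {\rm C}_{n,2n-1}(\Wh^+(T(2,3),0)) \#(-T(n,2n-1)), \qquad n\ge 2,$$
and the homomorphisms $\psi_n = \U_K(a_n)/a_n - \U_K(b_n)/b_n$, where $a_n = \tfrac{2}{2n-1}$ and $b_n = a_n + \epsilon_n$, exactly as in the proof of Theorem~\ref{theorem:main}. The only structural change is that, to invoke the $\cala_s$ bound rather than the $\cala_g$ bound, I must check that the two constituent homomorphisms $\U_K(a_n)/a_n$ and $\U_K(b_n)/b_n$ satisfy the two hypotheses of Theorem~\ref{theorem:one-sided}, not merely the weaker four-genus hypothesis used for $\cala_g$.

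First I would verify these hypotheses. Condition (1), that $|\U_K(t)/t| \le g_4(K)$, is part of Lemma~\ref{lemma:translate} and holds for every $t \in (0,1]$; since $a_n, b_n \in (0,1]$ (shrinking $\epsilon_n$ if necessary), both homomorphisms qualify. Condition (2) is supplied by the Note following Theorem~\ref{theorem:one-sided}: for each invariant $\U_K(t)/t$ one may take $J = T(2,3)$, because $\U_{T(2,3)}(t)/t = -1$ for all $t \in (0,1]$ while a single positive-to-negative crossing change unknots $T(2,3)$. Finally, Lemma~\ref{lemma:translate} shows that $\U_K(t)/t$ agrees with $\sigma(K)/2$ on every alternating knot, so $\U_K(a_n)/a_n$ and $\U_K(b_n)/b_n$ agree on alternating knots and the $\cala_s$-bound applies, giving $\cala_s(K) \ge |\psi_n(K)|$ for all $K$.

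With the bound in hand, the remainder is the triangular linear-independence argument already used for Theorem~\ref{theorem:torus} and Theorem~\ref{theorem:main}. Corollary~\ref{corollary:calc} gives $\psi_n(K_n) > 0$ and $\psi_n(K_m) = 0$ for $m < n$, so if $\sum a_n K_n = 0 \in \calc_a$ I take the largest $n$ with $a_n \ne 0$ and apply $\psi_n$ to reach a contradiction; hence $\{K_n\}$ is independent in $\calc/\calc_a$. For statement (1), I would, for each $n$, choose a positive integer $N_n$ with $\psi_n(N_n K_n) \ge N$; note that here I need $|\psi_n| \ge N$ rather than the $|\psi_n| \ge 2N$ forced by the factor $\tfrac{1}{2}$ in the genus bound, precisely because the $\cala_s$ bound carries no such factor. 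Setting $\calh_N$ to be the subgroup generated by $\{N_n K_n\}$, any nontrivial $J = \sum a_n N_n K_n$ satisfies $|\psi_p(J)| = |a_p|\,\psi_p(N_p K_p) \ge N$ for the top index $p$, so $\cala_s(J) \ge N$. Statement (2) is immediate, since each $K_n$ is topologically slice exactly as in Theorem~\ref{theorem:main}.

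The work is entirely routine once the singular-concordance bound is available; there is no genuine obstacle beyond the bookkeeping point flagged above, namely that the Upsilon homomorphisms must be seen to satisfy the one-sided crossing-change hypothesis of Theorem~\ref{theorem:one-sided} (equivalently, that $T(2,3)$ serves as the required knot $J$ for every $\U_K(t)/t$). Everything else transports directly from the proof of Theorem~\ref{theorem:main}.
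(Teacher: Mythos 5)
Your proposal is correct and follows exactly the route the paper intends: Theorem~\ref{theorem:singular} is stated there as an ``immediate'' parallel of Theorem~\ref{theorem:main}, obtained by rerunning that proof with the $\cala_s$-bound in place of the $\cala_g$-bound, which is precisely what you do. Your two bookkeeping points---verifying that each $\U_K(t)/t$ satisfies the hypotheses of Lemma~\ref{theorem:one-sided} with $J=T(2,3)$ (as asserted in the paper's Note), and replacing the threshold $2N$ by $N$ since the singular bound lacks the factor $\tfrac12$---are exactly the details the paper leaves implicit.
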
 \vskip.05in

\section{Questions}

It has been shown that the quotient $\calc_{ts}/(\calc_{ts}\cap \calc_a)$ is infinitely generated; informally,  $\calc_{ts}\cap \calc_a$ is small compared to $\calc_{ts}$.  It is possible that $\calc_{ts}\cap \calc_a = 0$.  Thus, one can ask the following question.  

\begin{question}\label{question1} Is there an alternating knot that is topologically slice  but not smoothly slice?\end{question}

Initially, the only known examples of topologically slice knots that are not smoothly slice arose directly from Freedman's theorem~\cite{freedman} that knots $K$ with $\Delta_K(t) = 1$ are topologically slice. Non-trivial knots with trivial Alexander polynomial are not alternating, so this result is probably not  of immediate use in resolving Question~\ref{question1}.  Freedman's theorem was generalized in~\cite{friedl-teichner} to a class of knots with nontrivial alternating Alexander polynomials, offering a possible source of examples, and in~\cite{hlr, hkl} infinite families of slice knots which are not even concordant to polynomial one knots were constructed.     Unfortunately, it is not clear how to use those results to build alternating examples.

\vskip.05in
In a different direction, Litherland~\cite{litherland} proved that the set of positive torus knots is linearly independent in $\calc$.  Clearly, they become dependent in $\calc/\calc_a$, since the torus knots $T(2,2n+1)$ are alternating. 

\begin{question} Is the set of torus knots $\{T(p,q)\}$ with $3 \le p < q$ independent in $\calc / \calc_a$?\end{question}


\end{document}